\documentclass[10pt]{amsart}
\linespread{1.0}
\usepackage{latexsym}
\usepackage{amsfonts,amsmath,amssymb,indentfirst}
\newcommand{\beq}{\begin{equation}}
\newcommand{\eeq}{\end{equation}}
\newcommand{\bdism}{\begin{displaymath}}
\newcommand{\edism}{\end{displaymath}}

\newcommand{\al}{\alpha}
\newcommand{\be}{\beta}
\newcommand{\ga}{\gamma}
\newcommand{\de}{\delta}
\newcommand{\alp}{\overline{\alpha}}
\newcommand{\bet}{\overline{\beta}}
\newcommand{\gam}{\overline{\gamma}}
\newcommand{\del}{\overline{\delta}}
\newcommand{\et}{\eta}

\newcommand{\setZ}{\mathbb Z}

\newcommand{\setR}{\mathbb R}
\newcommand{\setC}{\mathbb C}

\newtheorem{main}{Theorem}

\newtheorem{others}{Theorem}
\newtheorem{theorem}{Theorem}[section]
\newtheorem{proposition}[theorem]{Proposition}
\newtheorem{corollary}[theorem]{Corollary}

\newtheorem{definition}{Definition}

%\newenvironment{proof}
%   {{\bf \em Proof. }}
%   {\hspace*{\fill}\nolinebreak[1]\hspace*{\fill}\mbox{$\Box$}\\ \\}

\author[Di Cerbo]{Luca Fabrizio Di Cerbo}

\title[Finite-volume complex-hyperbolic surfaces]{Finite-volume complex-hyperbolic surfaces, their toroidal
compactifications, and geometric applications}
\begin{document}
\maketitle

\begin{abstract}
We study the classification of smooth toroidal compactifications of
nonuniform ball quotients in the sense of Kodaira and Enriques.
Moreover, several results concerning the Riemannian and complex
algebraic geometry of these spaces are given. In particular we show
that there are compact complex surfaces which admit Riemannian
metrics of nonpositive curvature, but which do not admit K\"ahler
metrics of nonpositive curvature. An infinite class of such examples
arise as smooth toroidal compactifications of ball quotients.
\end{abstract}

\section{Introduction}
\pagenumbering{arabic}

Let $\tilde{M}$ be a symmetric space of noncompact type, and let
$\textrm{Iso}_{0}(\tilde{M})$ denote the connected component of the
isometry group of $\tilde{M}$ containing the identity. Recall that
$\textrm{Iso}_{0}(\tilde{M})$ is a semi-simple Lie group. A discrete
subgroup $\Gamma\subset \textrm{Iso}_{0}(\tilde{M})$ is a
\emph{lattice} in $\tilde{M}$ if $\tilde{M}/\Gamma$ is of finite
volume. When $\Gamma$ is torsion free, then $\tilde{M}/\Gamma$ is a
finite volume manifold or a locally symmetric space. A lattice
$\Gamma$ is \emph{uniform} (\emph{nonuniform}) if $\tilde{M}/\Gamma$
is compact (noncompact).

The theory of compactifications of locally symmetric spaces or
varieties has been extensively studied, see for example
\cite{Borel}. In fact, locally symmetric varieties of noncompact
type often occur as moduli space in algebraic geometry and number
theory, see \cite{Mumford}. For technical reasons this beautiful
theory is mainly developed for quotients of symmetric spaces or
varieties by \emph{arithmetic} subgroups. For arithmetic subgroups
of semi-simple Lie groups a nice reduction theory is available
\cite{Borel}. Among many other things, the aforementioned theory can
be used to deduce their finite generation, the existence of finitely
many conjugacy classes of maximal parabolic subgroups, and the
existence of \emph{neat} subgroups of finite index.

The celebrated work of Margulis \cite{Margulis} implies that
lattices in any semi-simple Lie group of real rank bigger or equal
than two are arithmetic subgroups. This important theorem does not
cover many interesting cases such as lattices in the complex
hyperbolic space $\setC\mathcal{H}^{n}$, where non-arithmetic
lattices are known to exist by the work of Mostow and
Mostow-Deligne; see \cite{Deligne} and the bibliography therein.

It is thus desirable to develop a theory of compactifications of
locally symmetric varieties modeled on $\setC\mathcal{H}^{n}$
regardless of the arithmeticity of the defining torsion free
lattices. A compactification of finite-volume complex-hyperbolic
manifolds as a complex spaces with isolated normal singularities was
obtained by Siu and Yau in \cite{Siu}. This compactification may be
regarded as a generalization of the Baily-Borel compactification
defined for arithmetic lattices in $\setC\mathcal{H}^{n}$. A
\emph{toroidal} compactification for finite-volume
complex-hyperbolic manifolds was described by Hummel and Schroeder
in connection with cusps closing techniques arising from Riemannian
geometry \cite{Schroeder}; see also the preprint by Mok \cite{Mok}
and the classical reference \cite{Mumford} for what concerns the
arithmetic case.

The constructions of both Siu-Yau and Hummel-Schroeder rely on the
theory of nonpositively curved Riemannian manifolds. The key point
here is that the structure theorems for finite-volume manifolds of
negatively pinched curvature, or more generally for
\emph{visibility} manifolds \cite{Eberlein}, can be used as a
substitute of the reduction theory for arithmetic subgroups.

In this paper we study torsion-free nonuniform lattices in the
complex hyperbolic plane $\setC\mathcal{H}^{2}$ and their toroidal
compactifications. Let $\Gamma$ be a lattice as above and let
$\overline{\setC\mathcal{H}^{2}/\Gamma}$ denote its toroidal
compactification. When $\overline{\setC\mathcal{H}^{2}/\Gamma}$ is
smooth, it is a compact K\"ahler surface \cite{Hummel}. It is then
of interest to place these smooth K\"ahler surfaces in the framework
of the \emph{Kodaira-Enriques} classification of complex surfaces
\cite{Van de Ven}. The main purpose of this paper is to prove the
following:

\begin{main} \label{jubilo} Let $\Gamma$ be a nonuniform torsion-free
lattice in $\setC\mathcal{H}^{2}$. There exists a finite subset
$\mathcal{F}^{'}\subset\Gamma$ of parabolic isometries for which the
following holds: for any normal subgroup $\Gamma^{'}\lhd\Gamma$ with
the property that $\mathcal{F}^{'}\cap\Gamma^{'}$ is empty, then
$\overline{\setC\mathcal{H}^{2}/\Gamma^{'}}$ is a surface of general
type with ample canonical line bundle. Moreover,
$\overline{\setC\mathcal{H}^{2}/\Gamma^{'}}$ admits Riemannian
metrics of nonpositive sectional curvature but it cannot support
K\"ahler metrics of nonpositive sectional curvature.
\end{main}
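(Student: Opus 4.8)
The plan is to separate the two assertions—general type with ample canonical bundle, and the curvature dichotomy—into arguments both organized around the boundary divisor $T=\sum_i T_i$ of the toroidal compactification $X'=\overline{\setC\mathcal{H}^2/\Gamma'}$. Its components are disjoint smooth elliptic curves with $T_i^2<0$, and the logarithmic canonical divisor $K_{X'}+T$ is nef and big; this is the equality case of the logarithmic Bogomolov--Miyaoka--Yau inequality for ball quotients together with the structural results of Mok \cite{Mok} and Hummel--Schroeder \cite{Schroeder}. The finite set $\mathcal{F}'$ is produced from residual finiteness: since $\Gamma$ is finitely generated and linear (a lattice in the isometry group of $\setC\mathcal{H}^2$), Malcev's theorem separates any prescribed finite collection of nontrivial elements in a finite quotient. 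I would take $\mathcal{F}'$ to consist of the finitely many \emph{short} parabolic elements supported at the cusps—a fixed collection of low-order powers of the generators of each cusp subgroup—chosen so that the hypothesis $\mathcal{F}'\cap\Gamma'=\emptyset$ simultaneously forces $X'$ to be smooth (the cusp cross-sections unwrap enough to remove all boundary orbifold points) and makes each $T_i$ sufficiently unwrapped.

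For the algebro-geometric statement I would establish ampleness of $K_{X'}$ by the Nakai--Moishezon criterion; general type then follows. On the boundary one has $K_{X'}\cdot T_i=-T_i^2>0$ by adjunction, since each $T_i$ is elliptic. For any curve $C\not\subset T$ the class $K_{X'}+T$ restricts on $C\cap M$, with $M=\setC\mathcal{H}^2/\Gamma'$, to the normalized Ricci form of the complete K\"ahler--Einstein metric, whose Poincar\'e growth along $T$ yields $(K_{X'}+T)\cdot C>0$. Hence the nef and big divisor $K_{X'}+T$ contracts exactly the $T_i$, defining $\phi\colon X'\to X_0'$ onto the Siu--Yau compactification \cite{Siu} with $K_{X'}+T=\phi^*A$ for an ample $A$. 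Writing $K_{X'}=\phi^*A-T$, ampleness reduces to the positivity facts $K_{X'}^2=A^2+T^2>0$ and $K_{X'}\cdot C>0$ for curves $C$ entering a cusp. Here $A^2=(K_{X'}+T)^2=3\,\overline{c}_2(X',T)$ is proportional to $\mathrm{vol}(M)$ and grows with the covering degree, while the unwrapping forced by $\mathcal{F}'\cap\Gamma'=\emptyset$ keeps $\sum_i|T_i^2|$ below this value and compels a curve running into a thin cusp to meet $K_{X'}$ positively. This effective comparison of intersection numbers against the cusp geometry is the step I expect to be the main obstacle, and it is exactly where the choice of $\mathcal{F}'$ does the real work.

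For the curvature dichotomy, the existence of a Riemannian metric of nonpositive sectional curvature on $X'$ is not reproved: it is the output of the cusp-closing construction of Hummel--Schroeder \cite{Schroeder}. The substance is the nonexistence of a \emph{K\"ahler} metric of nonpositive sectional curvature, which I would obtain by contradiction from a single boundary curve $T_i$. Such a metric has nonpositive holomorphic sectional curvature and, via the identity writing holomorphic bisectional curvature as a sum of two real sectional curvatures, nonpositive holomorphic bisectional curvature. Since $T_i$ is a complex (hence minimal) curve of genus one, the Gauss equation gives $K_{T_i}=H^{X'}(u)-|\mathrm{II}|^2\le 0$ for the induced metric, while Gauss--Bonnet gives $\int_{T_i}K_{T_i}=2\pi\chi(T_i)=0$; therefore $K_{T_i}\equiv 0$, the curve is totally geodesic, and the ambient holomorphic sectional curvature vanishes in the tangent direction $u$ all along $T_i$. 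As $H^{X'}\le 0$ pointwise, $u$ realizes the maximum of the holomorphic sectional curvature, so Berger's lemma gives $R(u,\bar u,w,\bar w)\ge 0$ for every $w$; combined with nonpositivity of the bisectional curvature this forces $R(u,\bar u,\nu,\bar\nu)=0$ for a unit normal $\nu$. For the totally geodesic curve $T_i$ this quantity is precisely the curvature of the normal bundle, so $N_{T_i/X'}$ is flat and $T_i^2=\deg N_{T_i/X'}=0$, contradicting $T_i^2<0$. Hence $X'$ carries no K\"ahler metric of nonpositive sectional curvature, and the delicate point in this argument is the passage from vanishing holomorphic sectional curvature to flatness of the normal bundle through Berger's lemma.
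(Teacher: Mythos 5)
The gap is in your first half, and it is exactly the step you yourself flag as ``the main obstacle'': nothing in the proposal delivers $K_{X'}\cdot C>0$ for curves crossing the boundary. Even granting the Mok-type statement that $(K_{X'}+T)\cdot C>0$ for every irreducible $C\not\subset T$ (so that $\phi$ contracts exactly the $T_i$), consider a $(-1)$-curve $E$ with $E\cdot T=2$: it satisfies $(K_{X'}+T)\cdot E=1>0$ and is compatible with every fact you establish --- nefness and bigness of $K_{X'}+T$, $K_{X'}\cdot T_i=-T_i^2>0$, the contraction onto the Siu--Yau space --- and yet $K_{X'}\cdot E=-1$, so $K_{X'}$ is not even nef. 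Ruling out such curves \emph{is} the content of the ampleness assertion, and an ``effective comparison'' of $(K_{X'}+T)^2$ against $\sum_i|T_i^2|$ cannot do it: those numbers control $K_{X'}^2>0$, not $K_{X'}\cdot C>0$ curve by curve. You never explain how the unwrapping encoded in $\mathcal{F}'$ converts into this positivity, and I do not see a route that does not require substantial new analysis (Schwarz-lemma-type estimates for curves entering the cusps). The paper's mechanism is entirely different and is topological: $\mathcal{F}'$ is taken from Hummel--Schroeder \cite{Hummel}, \cite{Schroeder}, whose role is to produce a nonpositively curved Riemannian metric on $X'$; Cartan--Hadamard then makes $X'$ aspherical, hence free of rational curves (Theorem \ref{luca}); combined with $e(X')>0$ (Proposition \ref{Milnor}, the Milnor--Chern Pfaffian argument) and the fundamental-group arguments excluding Kodaira dimension $\leq 1$ (infinite $\pi_{1}$ of exponential growth, the boundary flats injecting, so that $\setZ\oplus\setZ$ would have to act discretely on $\setR$), this gives minimal general type; Nakai's criterion is then immediate (Corollary \ref{ample}), since a curve with $K_{X'}\cdot E=0$ would be a $(-2)$ rational curve, impossible in an aspherical manifold. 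Your sketch never uses asphericality, which is why it gets stuck. A smaller issue of the same kind: your $\mathcal{F}'$, defined ad hoc as low-order powers of cusp generators, is never shown to contain the Hummel--Schroeder set, which you need anyway for the Riemannian statement; residual finiteness is only relevant for the \emph{existence} of subgroups $\Gamma'$ avoiding $\mathcal{F}'$, not for defining $\mathcal{F}'$.

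By contrast, your second half --- nonexistence of a nonpositively curved K\"ahler metric --- is essentially correct and is a legitimate variant of the paper's argument. Both begin with the same step: the Gauss equation for the complex curve $T_i$ plus Gauss--Bonnet forces the ambient holomorphic sectional curvature to vanish along $T_i$ and forces $T_i$ to be totally geodesic. The paper then feeds this into the inequality $(R_{1\overline{1}2\overline{2}})^{2}\leq R_{1\overline{1}1\overline{1}}R_{2\overline{2}2\overline{2}}$ of (\ref{Siu}), obtained from the Siu--Mostow ansatz \cite{Mostow}, to kill the mixed curvature term, concludes that the Ricci curvature along the divisor vanishes, and contradicts ampleness via $K_{\overline{M}}\cdot\Sigma=0$. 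You instead invoke Berger's lemma at the maximum of the holomorphic sectional curvature to kill the same mixed term, then contradict $T_i^2<0$ via flatness of the normal bundle (Ricci equation for a totally geodesic submanifold). These are the same mechanism --- the discriminant argument behind (\ref{Siu}) is a pointwise form of Berger's lemma --- but your endgame has the virtue of not depending on ampleness, only on $T_i^2<0$, which comes with the toroidal construction; so this half stands independently of the gap above.
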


An outline of the paper follows. Section $\textrm{II}$ starts with a
summary of the results of Hummel and Schroeder \cite{Hummel}. Such
results are then combined with the Kodaira-Enriques classification
to prove that when the lattice $\Gamma$ is sufficiently small then
$\overline{\setC\mathcal{H}^{2}/\Gamma}$ is a surface of general
type with ample canonical bundle.

In section $\textrm{III}$ we present some examples of a surfaces of
general type which do not admit any nonpositively curved K\"ahler
metric, but whose underlying smooth manifolds admit Riemannian
metrics of nonpositive curvature. Finally the proof of Theorem A is
given.

In section $\textrm{IV}$ we show how Theorem \ref{jubilo}, combined
with the theory of semi-stable curves on algebraic surfaces
\cite{Sakai}, can be used to address the problem of the
projective-algebraicity of minimal compactifications (Siu-Yau) of
finite-volume complex-hyperbolic surfaces. The results of section
$\textrm{IV}$ are then summarized in Theorem \ref{algebraicity}. The
result obtained is effective.

The projective-algebraicity of minimal compactifications is proved,
through $L^{2}$-estimates for the $\overline{\partial}$-operator, by
Mok in \cite{Mok}. This analytical approach works in any dimension.

\section{Toroidal Compactifications and the Kodaira-Enriques Classification}

Let $\textrm{PU}(1,2)$ denote the connected component of
$\textrm{Iso}(\setC\mathcal{H}^{2})$ containing the identity. Let
$\Gamma$ be a nonuniform torsion-free lattice of holomorphic
isometries of the complex hyperbolic plane $\setC\mathcal{H}^{2}$,
i.e., $\Gamma\leq \textrm{PU}(1,2)$. Recall that the locally
symmetric space $\setC\mathcal{H}^{2}/\Gamma$ has finitely many cusp
ends $A_{1}, ..., A_{n}$ which are in one to one correspondence with
conjugacy classes of the maximal parabolic subgroups of $\Gamma$
\cite{Eberlein1}. The set of all parabolic elements of $\Gamma$ can
be written as a disjoint union of subsets $\Gamma_{x}$, where
$\Gamma_{x}$ is the set of all parabolic elements in $\Gamma$ having
$x$ as unique fixed point. Here $x$ is a point in the natural point
set compactification of $\setC\mathcal{H}^{2}$ obtained by adjoining
points at infinity corresponding to asymptotic geodesic rays. Thus,
given a cusp $A_{i}$, let us consider the associated maximal
parabolic subgroup $\Gamma_{x_{i}}\leq \Gamma$ and the horoball
$\textrm{HB}_{x_{i}}$ stabilized by $\Gamma_{x_{i}}$. We then have
that $\textrm{HB}_{x_{i}}/\Gamma_{x_{i}}$ is naturally identified
with $A_{i}$.

Recall that after choosing an Iwasawa decomposition \cite{Eberlein}
for $\textrm{PU}(1,2)$, we get a identification of
$\partial\textrm{HB}$ with the three dimensional Heisenberg Lie
group $N$. Moreover, $N$ comes equipped with a left invariant metric
and then we may view $\Gamma_{x_{i}}$ as a lattice in
$\textrm{Iso}(N)$. The cusps $A_{1}, ..., A_{n}$ are then identified
with $N/\Gamma_{x_{i}}\times [0,\infty)$, for $i=1, ...,n$.

The isometry group of $N$ is isomorphic to the semi-direct product
$\textrm{Iso}(N)=N\rtimes U(1)$. We say that a lattice in
$\textrm{Iso}(N)$ is rotation free if it is a lattice in $N$, i.e.,
if it is a lattice of left translations. A parabolic isometry
$\phi\in\Gamma$ is called \emph{unipotent} if it acts as a
translation on its invariant horospheres.

We now briefly summarize some of the results of Hummel \cite{Hummel}
and Hummel-Scroeder \cite{Schroeder}.

\begin{others}[Hummel-Scroeder]\label{Hummel1}
Let $\Gamma$ be a nonuniform torsion-free lattice in
$\setC\mathcal{H}^{2}$. Then, there exists a finite subset
$\mathcal{F}\subset\Gamma$ of parabolic isometries such that for any
normal subgroup $\Gamma^{'}\lhd\Gamma$ with the property that
$\mathcal{F}\cap\Gamma^{'}$ is empty, then
$\overline{\setC\mathcal{H}^{2}/\Gamma^{'}}$ is smooth and K\"ahler.
\end{others}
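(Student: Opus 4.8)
The plan is to produce, from each cusp, a finite "forbidden set" of parabolic isometries whose exclusion forces both smoothness and the Kähler property of the toroidal compactification.

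The plan is to reduce the entire statement to a local analysis at each of the finitely many cusps $A_1,\dots,A_n$ and to identify, cusp by cusp, the precise obstruction to smoothness of the toroidal filling. First I would record the structure of a maximal parabolic subgroup: for each cusp the stabilizer $\Gamma_{x_i}\leq N\rtimes U(1)$ fits into a short exact sequence $1\to \Gamma_{x_i}\cap N\to \Gamma_{x_i}\to C_i\to 1$, where $\Gamma_{x_i}\cap N$ is a lattice in the Heisenberg group $N$ and $C_i$, the image under the projection $p\colon N\rtimes U(1)\to U(1)$, is a discrete subgroup of the compact group $U(1)$ and hence a finite cyclic group. This $C_i$ is the rotational part of the cusp. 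Since $\Gamma$ is torsion free it acts freely on $N$, so each cross section $N/\Gamma_{x_i}$ is a genuine manifold, and the compactification is local, being built by filling in each cusp separately.

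Next I would make the smoothness criterion precise. In the rotation-free case $C_i=\{1\}$ the cross section $N/(\Gamma_{x_i}\cap N)$ is a circle bundle over the two-torus $E_i=\mathbb{C}/\Lambda_i$, with $\Lambda_i$ the image of $\Gamma_{x_i}\cap N$ in $N/Z\cong\mathbb{C}$ ($Z$ the center of $N$), and the Hummel--Schroeder filling glues in a holomorphic disk bundle over the elliptic curve $E_i$; the resulting germ is smooth and the added divisor $E_i$ has negative self-intersection. When $C_i\neq\{1\}$ a generator acts on $E_i$ by an affine map with nontrivial linear part $\zeta\neq 1$, which necessarily has fixed points on $E_i$, while acting on the normal disk by the rotation induced from its translation component along the central circle; near such a fixed point the filling is modelled on a cyclic quotient $\mathbb{C}^2/(\mathbb{Z}/\ell)$ with both weights nonzero, i.e. a genuine quotient singularity. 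Thus smoothness at the cusp is equivalent to the parabolic subgroup being unipotent (rotation free). Once smoothness of $\overline{\setC\mathcal{H}^{2}/\Gamma^{'}}$ is established, the K\"ahler conclusion is immediate from Hummel's theorem quoted in the excerpt \cite{Hummel}.

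It remains to produce the finite set $\mathcal{F}$. Using normality of $\Gamma^{'}$ in $\Gamma$ I would first reduce to the finitely many cusps of $\Gamma$ itself: every parabolic fixed point $y$ of $\Gamma^{'}$ is $\Gamma$-equivalent to some $x_i$, say $y=gx_i$, and since $g^{-1}\Gamma^{'}g=\Gamma^{'}$ one has $\Gamma^{'}\cap\Gamma_y=g(\Gamma^{'}\cap\Gamma_{x_i})g^{-1}$, so it suffices to arrange that $\Gamma^{'}\cap\Gamma_{x_i}$ be unipotent for $i=1,\dots,n$. Because each $C_i$ is finite cyclic, I would then define $\mathcal{F}$ as a finite collection of rotational (non-unipotent) parabolic elements chosen so that the normal closure in $\Gamma$ of any rotational parabolic meets $\mathcal{F}$. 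Granting this property, if $\Gamma^{'}\lhd\Gamma$ avoids $\mathcal{F}$ then $\Gamma^{'}$ can contain no rotational parabolic $\gamma$ at all, for $\gamma\in\Gamma^{'}$ forces $\langle\langle\gamma\rangle\rangle\subseteq\Gamma^{'}$ and hence pushes an element of $\mathcal{F}$ into $\Gamma^{'}$; therefore every $\Gamma^{'}\cap\Gamma_{x_i}$ is unipotent and the compactification is smooth.

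The main obstacle is precisely the construction of $\mathcal{F}$ with this normal-closure property, namely showing that a rotation cannot be smuggled in by arbitrarily long or arbitrarily conjugated words while a fixed finite test set is avoided. Here the finite cyclicity of the $C_i$ is essential but not by itself formally sufficient, since the rotational elements of a single cusp form infinitely many cosets of $\Gamma_{x_i}\cap N$ and cannot be listed by representatives. The route I would pursue is to exploit the commutator relation $[\delta,\gamma]\equiv(1-\zeta)\delta$ in $\Lambda_i$, which forces a finite-index subgroup of translations, together with the central element $\gamma^{\ell}$, into $\langle\langle\gamma\rangle\rangle$, and then to use the finiteness of the cusp data to extract a single uniform finite test set valid for all $\gamma$. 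This uniformity statement, where the Riemannian structure theory for finite-volume negatively pinched (visibility) manifolds and the associated thick-thin decomposition \cite{Eberlein, Schroeder} enter, is the step I expect to require the most care.
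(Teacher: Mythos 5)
First, a point of orientation: the paper does not prove this statement at all --- it is imported verbatim as a theorem of Hummel and Hummel--Schroeder with the citations \cite{Hummel}, \cite{Schroeder}, so your proposal can only be measured against Hummel's published argument, not against anything in this text. Measured that way, your architecture is the right one and matches the actual proof in outline: reduce by normality of $\Gamma'$ to the finitely many cusps of $\Gamma$ itself; observe that each rotational part $C_i$ is finite; note that once every parabolic of $\Gamma'$ is unipotent the toroidal compactification is smooth; and look for a finite set $\mathcal{F}$ of rotational parabolics meeting the normal closure $\langle\langle\gamma\rangle\rangle$ of \emph{every} rotational parabolic $\gamma$.

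The problem is that the one step you defer --- ``the construction of $\mathcal{F}$ with this normal-closure property,'' which you yourself flag as ``the step I expect to require the most care'' --- \emph{is} the theorem. Everything else in your write-up is routine reduction, so as it stands nothing has been proved. Moreover, the sketch of that step needs repair in two places. (i) The commutator mechanism $[\delta,\gamma]\equiv(1-\zeta)\bar{\delta}$ only produces translations transverse to the center $Z$ of $N$: since the rotation $\zeta$ fixes $Z$, conjugating $\gamma$ by central elements yields nothing, and the central part of the desired finite-index subgroup of $\Lambda_i=\Gamma_{x_i}\cap N$ must instead be generated by commutators of the elements $n_\delta=(\delta\gamma\delta^{-1})\gamma^{-1}$ with one another; one must then check that the resulting finite-index subgroup can be chosen depending only on the cusp and on $\zeta$, not on $\gamma$, since this uniformity is exactly what legitimizes defining $\mathcal{F}$ by finitely many coset representatives $\gamma_\zeta\delta_j$. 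Carried out, this works --- and is essentially Hummel's proof --- but it is precisely the part you have not done. (ii) Your finiteness claim for $C_i$ is justified incorrectly: the projection of a discrete subgroup of $N\rtimes U(1)$ to $U(1)$ need not be discrete (a ``screw motion,'' central translation composed with an irrational rotation, generates a discrete cyclic group with dense rotational image). What makes $C_i$ finite is that it preserves the lattice $\bar{\Lambda}_i\subset N/Z\cong\setC$, and the subgroup of $U(1)$ preserving a lattice in $\setC$ is finite; this in turn uses that $\Lambda_i$ is a cocompact lattice in $N$, i.e.\ the cusp structure theory of \cite{Eberlein1}. Finally, quoting \cite{Hummel} for the K\"ahler conclusion is circular here, since the statement being proved is that theorem; the smooth-implies-K\"ahler claim needs its own argument (it follows from the explicit construction of the compactification, or from evenness of the first Betti number).
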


Furthermore, using a cusp closing technique arising from Riemannian
Geometry they were able to prove:

\begin{others}[Hummel-Schroeder]\label{Hummel2}
Let $\Gamma$ be a nonuniform torsion-free lattice in
$\setC\mathcal{H}^{2}$. Then, there exists a finite subset
$\mathcal{F^{'}}\subset\Gamma$ of parabolic isometries such that
$\mathcal{F^{'}}\supseteq\mathcal{F}$ for which the following holds.
For any normal subgroup $\Gamma^{'}\lhd\Gamma$ with the property
that $\mathcal{F^{'}}\cap\Gamma^{'}$ is empty, then
$\overline{\setC\mathcal{H}^{2}/\Gamma^{'}}$ admits a Riemannian
metric of nonpositive sectional curvature.
\end{others}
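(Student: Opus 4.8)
The plan is to localize at each cusp and carry out a cusp-closing surgery of the type introduced, in the real-hyperbolic setting, by Gromov--Thurston, adapted to the Heisenberg geometry of complex-hyperbolic cusps. Realizing $\setC\mathcal{H}^{2}=AN$ as a solvable Lie group with $A=\setR$ and $N$ the three-dimensional Heisenberg group, and letting $r$ be the $A$-coordinate, in suitable horospherical coordinates the complex-hyperbolic metric on a cuspidal neighbourhood takes the doubly-warped, twisted form
\begin{equation}
g_{0}=dr^{2}+e^{-4r}\,\theta^{2}+e^{-2r}\,g_{T^{2}},
\end{equation}
where $\theta$ is the central (contact) direction of $\mathfrak{n}$, $g_{T^{2}}$ is a flat metric on the horizontal $\setR^{2}$, and the bracket $[X,Y]=Z$ is recorded in $d\theta\neq 0$. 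After passing to $\Gamma'$ the cross-section $N/\Gamma'_{x_{i}}$ is a circle bundle over a two-torus; this nilmanifold bounds the disk bundle glued in to form the compactifying divisor, so the central circle is the one to be collapsed.

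First I would look for an interpolating metric $g=dr^{2}+a(r)^{2}\theta^{2}+b(r)^{2}g_{T^{2}}$ on the capping disk bundle, subject to three requirements: (i) smooth collapse of the central circle at the core, $a(0)=0$, $a'(0)=1$, together with $b(0)>0$, $b'(0)=0$; (ii) a $C^{\infty}$ match to $g_{0}$ across a gluing collar; and (iii) convexity $a''\geq 0$, $b''\geq 0$ in between. Requirement (iii) disposes of the radial planes, since $K(\partial_{r},\theta)=-a''/a$ and $-b''/b$ are nonpositive for convex $a,b$; the single horizontal plane $\{X,Y\}$ is automatically nonpositive, as the submersion onto the flat $T^{2}$ only decreases its curvature by an O'Neill correction.

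The crux is the mixed sectional curvatures of the planes spanning the central fibre and a horizontal base direction. The twisting forces a positive O'Neill contribution there, of dimensionless size $\sim(a/b^{2})^{2}$ coming from $d\theta$, which must be dominated by the negative warping terms. Keeping the sum nonpositive pins the fibre thin relative to the base, $a\lesssim b^{2}$, throughout the cap; equivalently the horizontal translation lengths of $\Gamma'_{x_{i}}$ must be large compared with its central translation length. This is exactly where the exceptional set enters: enlarging $\mathcal{F}$ to a finite $\mathcal{F}'$ and imposing $\mathcal{F}'\cap\Gamma'=\emptyset$ strips out the finitely many short parabolics, so that every cusp cross-section of $\Gamma'$ is geometrically long enough for a convex, fibre-thin interpolation to exist. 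I expect the curvature estimate on the transition collar --- bending the hyperbolic profiles $e^{-4r}$ and $e^{-2r}$ into the core profile without ever letting the twisting term overtake the warping --- to be the main technical obstacle, resolved by an explicit convex choice of $a(r)$ keeping $a/b^{2}$ uniformly small.
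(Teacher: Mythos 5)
A preliminary remark: the paper contains no proof of this statement --- it is quoted as a theorem of Hummel--Schroeder and attributed to \cite{Schroeder}, so your attempt can only be measured against the cusp-closing argument of that reference. Your outline is, in skeleton, exactly that argument: the horospherical normal form $dr^{2}+e^{-4r}\theta^{2}+e^{-2r}g_{T^{2}}$, the warped ansatz $dr^{2}+a(r)^{2}\theta^{2}+b(r)^{2}g_{T^{2}}$ on the glued-in disk bundle, convexity of the profiles, and the thinness constraint on $a/b^{2}$ forced by the positive O'Neill term in the fibre--base planes are the ingredients of the Hummel--Schroeder construction.

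Two gaps, however, keep this from being a proof. First, you verify nonpositivity only on the coordinate planes $(\partial_{r},Z)$, $(\partial_{r},X)$, $(X,Y)$, $(X,Z)$, where $Z$ is the fibre direction. For a twisted metric of this kind the curvature tensor has nonvanishing mixed components --- already in $\setC\mathcal{H}^{2}$ itself one has $R(\partial_{r},Z,X,Y)\neq 0$, since $(\partial_{r},Z)$ and $(X,Y)$ are the two $J$-invariant planes --- so the sectional curvature of a generic plane involves cross terms that your four computations never see, and these must be dominated by the diagonal terms throughout the transition collar. That domination is precisely the content of the Hummel--Schroeder curvature estimates; your sketch names it as ``the main technical obstacle'' and then assumes it away, so the heart of the theorem is postulated rather than proved. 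Second, the reduction to a finite exceptional set is not correct as stated: the parabolic elements of $\Gamma$ that are ``too short'' for the closing do not form a finite set, because shortness (minimal displacement along a reference horosphere) is conjugation-invariant and the relevant conjugacy classes are infinite. What is finite is the number of such conjugacy classes, measured on a fixed embedded horoball of $\Gamma$ (which remains embedded in every quotient by a subgroup, and which is what makes ``short'' an absolute rather than scale-invariant notion, the ratio of central to squared horizontal lengths being invariant under the cusp flow). One takes $\mathcal{F}'$ to be a set of representatives of these classes, and it is only the hypothesis that $\Gamma'$ is \emph{normal} in $\Gamma$ that upgrades $\mathcal{F}'\cap\Gamma'=\emptyset$ to the exclusion of the entire classes, hence to the required length bounds at every cusp of $\Gamma'$. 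Normality never enters your argument, yet without it the quantifier structure of the statement (one finite $\mathcal{F}'$ serving all $\Gamma'$) is false: a non-normal $\Gamma'$ avoiding $\mathcal{F}'$ could still contain a conjugate of a short parabolic and thus have a cusp too short to close.
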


A few remarks about these results. A nonuniform torsion-free lattice
in $\setC\mathcal{H}^{2}$ admits a smooth toroidal compactification
if its parabolic isometries are all unipotent. In the arithmetic
case this is achieved by choosing a neat subgroup of finite index
\cite{Mumford}. It is also interesting to observe that we have
plenty of normal subgroups satisfying the requirements of Theorem
\ref{Hummel1} and \ref{Hummel2}, in fact $\textrm{PU}(1,2)$ is
linear and then \emph{residually finite} by a fundamental result of
Mal'cev \cite{Malcev}. Finally, it is interesting to notice that in
general one expects the strict inclusion
$\mathcal{F^{'}}\supset\mathcal{F}$ to hold. Explicit examples can
be derived from the construction of Hirzebruch \cite{Hirzebruch}.

For simplicity, a compactification as in Theorem \ref{Hummel2} will
be referred as toroidal \emph{Hummel-Schroeder} compactification.

\begin{proposition}\label{Milnor}
Let $M$ be a finite-volume complex-hyperbolic surface which admits a
toroidal Hummel-Schroeder compactification. Then the Euler number of
$\overline{M}$ is strictly positive.
\end{proposition}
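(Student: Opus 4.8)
The plan is to reduce the computation of the Euler number $e(\overline M)$ to the Euler characteristic of the open surface $M=\setC\mathcal{H}^2/\Gamma'$, and then to produce strict positivity from the Gauss-Bonnet theorem for the complete finite-volume complex-hyperbolic metric. First I would record the topology of the compactification. By construction $\overline M$ is obtained from a compact core $M_0$ — namely $M$ with each cusp end $A_i\cong N/\Gamma_{x_i}\times[0,\infty)$ truncated — by gluing in, at every cusp, a tubular neighborhood $E_i$ of the compactifying elliptic curve $T_i$. The gluing takes place along the cusp cross-sections $N/\Gamma_{x_i}$, which are closed orientable $3$-manifolds (quotients of the Heisenberg group $N$), and each $E_i$ is a disk bundle over the torus $T_i$.

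Second, I would compute $e(\overline M)$ by additivity of the Euler characteristic. Since $M$ deformation retracts onto $M_0$, we have $e(M)=e(M_0)$. Decomposing $\overline M=M_0\cup\bigsqcup_i E_i$ and applying inclusion-exclusion gives $e(\overline M)=e(M_0)+\sum_i e(E_i)-\sum_i e(N/\Gamma_{x_i})$. Every closed odd-dimensional manifold has vanishing Euler characteristic, so $e(N/\Gamma_{x_i})=0$; and $E_i$ retracts onto the elliptic curve $T_i$, whence $e(E_i)=e(T_i)=0$. Therefore $e(\overline M)=e(M_0)=e(M)$, and the problem is reduced to showing $e(M)>0$.

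Third, I would establish $e(M)>0$ via Gauss-Bonnet. The symmetric metric on $\setC\mathcal{H}^2$ has the feature that its Chern-Gauss-Bonnet (Pfaffian) integrand equals a strictly positive constant multiple of the volume form — this reflects that $\setC\mathcal{H}^2$ is a rank-one symmetric space of pinched negative curvature, and in a suitable normalization the integrand is $\frac{3}{8\pi^2}\,dV$. Granting the Gauss-Bonnet formula in the finite-volume setting, one concludes $e(M)=\frac{3}{8\pi^2}\,\mathrm{Vol}(M)>0$, and combining with the previous step yields $e(\overline M)>0$.

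The main obstacle is the validity of Gauss-Bonnet for the noncompact, finite-volume manifold $M$: one must control the boundary transgression terms. I would handle this by applying the Gauss-Bonnet theorem with boundary to the truncated core $M_0$ and then letting the truncation recede to infinity. The interior integral converges to $\frac{3}{8\pi^2}\,\mathrm{Vol}(M)$, while the boundary integral over the cross-sections $N/\Gamma_{x_i}$ tends to $0$, since both the volume of these shrinking Heisenberg nilmanifolds and their second fundamental form decay along the cusp. Alternatively, one may phrase the entire argument in terms of the logarithmic Chern numbers of the pair $\big(\overline M,\sum_i T_i\big)$, using Hirzebruch-Mumford proportionality together with $e\big(\sum_i T_i\big)=0$ to identify $c_2(\overline M)$ with the positive log-Euler number.
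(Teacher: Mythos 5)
Your proof is correct, but it takes a genuinely different route from the paper's. The paper never leaves the compact manifold: it takes the nonpositively curved Hummel--Schroeder metric on $\overline{M}$ (whose existence is the defining property of this compactification), picks Chern's special orthonormal frame, and observes --- following Milnor's unpublished argument as recorded by Chern --- that in dimension four the Gauss--Bonnet integrand then becomes a sum of squares plus products of sectional curvatures along perpendicular planes, hence is pointwise nonnegative when the sectional curvature is nonpositive; Chern--Weil theory on the closed manifold then gives positivity, with strictness coming from the fact that the metric is genuinely negatively curved away from the closed cusps. You instead discard the compactified metric entirely: you reduce $e(\overline{M})=e(M)$ by inclusion-exclusion (using that the Heisenberg cross-sections are closed odd-dimensional manifolds and that the disk-bundle neighborhoods of the elliptic curves retract onto tori, so all correction terms vanish), and then prove $e(M)>0$ by Gauss--Bonnet for the complete finite-volume complex-hyperbolic metric, controlling the transgression terms along the receding cusp cross-sections. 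Both arguments are sound. Yours buys more: it yields the exact proportionality $e(\overline{M})=c\,\mathrm{Vol}(M)$ with $c>0$ rather than mere positivity, and since it never invokes the nonpositively curved metric on $\overline{M}$, it proves positivity of the Euler number for \emph{every} smooth toroidal compactification, not only Hummel--Schroeder ones. The paper's argument is softer and shorter: compactness plus the pointwise sign of the Pfaffian does all the work, with no need to justify Gauss--Bonnet on a noncompact manifold or to estimate volume decay and second fundamental forms along the cusps --- which is precisely the kind of analysis your approach must carry out.
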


\begin{proof}

The idea for the proof goes back to an unpublished result of J.
Milnor about the Euler number of closed four dimensional Riemannian
manifolds having sectional curvatures along perpendicular planes of
the same sign; see the paper by S. S. Chern \cite{Chern}. Let
$(\overline{M},g)$ be the Riemannian manifold obtained by closing
the cusps of $M$ under the condition of nonpositive curvature
\cite{Schroeder}. Let $\Omega$ be its curvature matrix. We can
always choose \cite{Chern} a orthonormal frame $\{e_{i}\}^{4}_{i=1}$
such that:
\begin{align}\notag
R_{1231}=R_{1241}=R_{1232}=R_{1242}=R_{1332}=R_{1341}=0.
\end{align}
It follows that
\begin{align}\notag
Pf(\Omega)&=\Omega^{1}_{2}\wedge\Omega^{3}_{4}-\Omega^{1}_{3}\wedge\Omega^{2}_{4}+\Omega^{1}_{4}\wedge\Omega^{2}_{3}\\
\notag
&=\{R_{1221}R_{3443}+R_{1243}^{2}+R_{1331}R_{2442}+R_{1342}^{2}\\
\notag &+R_{1441}R_{2332}+R_{1234}^{2}\}d\mu_{g},
\end{align}
where $Pf(\Omega)$ is the Pfaffian of the skew symmetric matrix
$\Omega$. The statement is now a consequence of Chern-Weil theory.

\end{proof}

We can now use the Kodaira-Enriques classification of closed smooth
surfaces \cite{Van de Ven} to derive the following theorem. The
proof is in the spirit of the theory of nonpositively curved spaces.

\begin{theorem}\label{luca}
Let $M$ be a finite-volume complex-hyperbolic surface which admits a
toroidal Hummel-Schroeder compactification. Then $\overline{M}$ is a
surface of general type without rational curves.
\end{theorem}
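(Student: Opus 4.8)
The plan is to combine the asphericity furnished by nonpositive curvature with the positivity of the Euler number established in Proposition \ref{Milnor}, and then to eliminate, one class at a time, every entry of the Kodaira--Enriques list except general type. First I would record that $\overline{M}$ is aspherical: by Theorem \ref{Hummel2} it carries a Riemannian metric of nonpositive sectional curvature, and since $\overline{M}$ is compact this metric is complete, so the Cartan--Hadamard theorem identifies its universal cover diffeomorphically with $\setR^{4}$. Hence $\overline{M}$ is a $K(\pi,1)$ and $\pi_{k}(\overline{M})=0$ for all $k\geq 2$; in particular $\pi_{2}(\overline{M})=0$.

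Next I would prove that $\overline{M}$ contains no rational curves. If $C\subset\overline{M}$ were such a curve, its normalization would supply a nonconstant holomorphic map $\nu\colon\mathbb{P}^{1}\to\overline{M}$ with image $C$ and $\nu_{*}[\mathbb{P}^{1}]=[C]$ in $H_{2}(\overline{M};\setZ)$. Viewed as a continuous map $S^{2}\to\overline{M}$, the map $\nu$ is null-homotopic because $\pi_{2}(\overline{M})=0$, whence $\nu_{*}[\mathbb{P}^{1}]=0$ and so $[C]=0$. On the other hand, writing $\om$ for the Kähler form provided by Theorem \ref{Hummel1}, one has $\langle[\om],[C]\rangle=\int_{C}\om>0$, since a complex curve has positive volume. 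This contradiction shows that no rational curve exists; in particular $\overline{M}$ carries no $(-1)$-curves and is therefore its own minimal model, so the classification may be applied directly to $\overline{M}$.

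Finally I would run through the Kodaira--Enriques classification of the minimal surface $\overline{M}$. Surfaces of Kodaira dimension $-\infty$ are rational or ruled and always contain rational curves, so they are excluded by the previous step. Among surfaces of Kodaira dimension $0$, the K3 and Enriques surfaces have universal cover a K3 surface and hence nonvanishing $\pi_{2}$, contradicting asphericity, while abelian and bielliptic surfaces have vanishing Euler number, contradicting Proposition \ref{Milnor}. For Kodaira dimension $1$, i.e. the properly elliptic surfaces, I would exploit the elliptic fibration: every singular fibre with nonzero Euler contribution contains a rational component, so the absence of rational curves forces every fibre to reduce to a smooth elliptic curve, which by multiplicativity of the Euler characteristic gives $e(\overline{M})=0$ and again contradicts Proposition \ref{Milnor}. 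The only surviving possibility is Kodaira dimension $2$, so $\overline{M}$ is of general type.

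The step requiring the most care is the elliptic case $\kappa=1$: one must verify that positivity of the Euler number genuinely forces singular fibres with rational components, invoking the Kodaira classification of singular fibres of elliptic fibrations together with the fact that a relatively minimal elliptic surface with only smooth (possibly multiple) fibres has vanishing Euler number. The remaining cases are dispatched cleanly by the two invariants already in hand, namely asphericity and the sign of the Euler number.
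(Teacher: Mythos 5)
Your proof is correct, and while its skeleton (asphericity via Cartan--Hadamard, absence of rational curves, positivity of the Euler number from Proposition \ref{Milnor}, then a case-by-case elimination through the Kodaira--Enriques list) matches the paper's, the way you kill the two hardest cases is genuinely different. For Kodaira dimension $0$ the paper quotes that minimal surfaces with $\kappa=0$ and positive Euler number have finite fundamental group and invokes the infiniteness of $\pi_{1}(\overline{M})$, whereas you split into K3/Enriques (ruled out by $\pi_{2}\neq 0$ versus asphericity) and abelian/bielliptic (ruled out by $e=0$); these are essentially equivalent in difficulty. The real divergence is at $\kappa=1$: the paper argues group-theoretically, using the Friedman--Morgan description $\pi_{1}(S)\simeq\pi_{1}^{Orb}(C)$, torsion in the orbifold fundamental group when multiple fibers are present, Avez's theorem on exponential growth to force the base to have genus $\geq 2$, and the injection of the $\setZ\oplus\setZ$ carried by the compactifying tori into a hyperbolic surface group -- a contradiction. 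You instead use Kodaira's classification of singular fibers: every fiber of an elliptic fibration is either (a multiple of) a smooth elliptic curve, with zero Euler contribution, or contains a rational component, so the absence of rational curves together with $e(\overline{M})=\sum_{t}e(F_{t})$ forces $e(\overline{M})=0$, contradicting Proposition \ref{Milnor}. Your route is purely algebro-geometric and more economical in its inputs -- it needs only asphericity, the K\"ahler form from Theorem \ref{Hummel1}, and $e>0$, and it entirely avoids the theory of fundamental groups of elliptic surfaces, growth of groups, and the geometric fact that the boundary tori are $\pi_{1}$-injective; the paper's route, by contrast, stays within the spirit of nonpositively curved geometry that motivates the whole construction. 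One small merit of your write-up worth keeping: you make explicit the standard argument (null-homotopy of the normalization $\setC P^{1}\to\overline{M}$ against $\int_{C}\om>0$) behind the paper's one-line assertion that aspherical K\"ahler surfaces contain no rational curves, and you note that this also yields minimality of $\overline{M}$ before the classification is applied.
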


\begin{proof}

Since $\overline{M}$ admits a Riemannian metric of nonpositive
sectional curvature, the Cartan-Hadamard theorem \cite{Petersen}
implies that the universal cover of $\overline{M}$ is diffeomorphic
to the four dimensional euclidean space. Consequently,
$\overline{M}$ is aspherical and then it cannot contain rational
curves. Moreover, the second Betti number of $\overline{M}$ is even
since by construction it admits a K\"ahler metric. By the
Kodaira-Enriques classification \cite{Van de Ven} we conclude that
the Kodaira dimension of $\overline{M}$ cannot be negative.

From Proposition \ref{Milnor}, we know that the Euler number of
$\overline{M}$ is strictly positive. The minimal complex surfaces
with Kodaira dimension equal to zero and positive Euler number are
simply connected or with finite fundamental group. Since
$\pi_{1}(\overline{M})$ is infinite, the Kodaira dimension of
$\overline{M}$ is bigger or equal than one.

The fundamental group of an elliptic surface with positive Euler
number is completely understood in terms of the \emph{orbifold}
fundamental group of the base of the elliptic fibration. More
precisely, denoting by $\pi: S\longrightarrow C$ the elliptic
fibration, if $S$ has no multiple fibers then $\pi$ induces an
isomorphism $\pi_{1}(S)\simeq\pi_{1}(C)$. In the case where we allow
multiple fibers we have the isomorphism
$\pi_{1}(S)\simeq\pi^{Orb}_{1}(C)$. For these results we refer to
\cite{Morgan2}. We are now ready to show that $\overline{M}$ cannot
be an elliptic surface. When $S$ has multiple fibers, the group
$\pi_{1}(S)$ has always torsion and then it cannot be the
fundamental group of a nonpositively curved manifold. If we assume
$\pi_{1}(\overline{M})\simeq\pi_{1}(C)$, the fact that
$\pi_{1}(\overline{M})$ grows exponentially \cite{Avez} forces the
genus of the Riemann surface $C$ to be bigger or equal than two.
Since all closed geodesics in a manifold of nonpositive curvature
are essential in $\pi_{1}$, we have that the fundamental group of
the flats introduced in the compactification injects in
$\pi_{1}(\overline{M})$ and then by assumption in $\pi_{1}(C)$. By
elementary hyperbolic geometry this would imply that
$\setZ\oplus\setZ$ acts as a discrete subgroup of $\setR$, which is
clearly impossible.
\end{proof}

\begin{corollary}\label{ample}
A toroidal Hummel-Schroeder compactification has ample canonical
line bundle.
\end{corollary}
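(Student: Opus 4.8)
The plan is to combine Theorem \ref{luca} with the Nakai--Moishezon ampleness criterion and the standard description of curves of canonical degree zero on a minimal surface of general type. First I would check that $\overline{M}$ is \emph{minimal}. By Theorem \ref{luca} it is a surface of general type containing no rational curves; in particular it carries no $(-1)$-curve (which would be a smooth rational curve), and a surface of general type with no $(-1)$-curves is minimal. Hence the canonical line bundle $K_{\overline{M}}$ is nef and big, and in particular $K_{\overline{M}}^{2}>0$.

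By the Nakai--Moishezon criterion, $K_{\overline{M}}$ is ample if and only if $K_{\overline{M}}^{2}>0$ and $K_{\overline{M}}\cdot C>0$ for every irreducible curve $C\subset\overline{M}$. Since $K_{\overline{M}}$ is nef, the first condition holds and the only possible obstruction to ampleness is the existence of an irreducible curve $C$ with $K_{\overline{M}}\cdot C=0$. The key step is to show that any such $C$ would be a smooth rational $(-2)$-curve. Indeed, if $K_{\overline{M}}\cdot C=0$ then, because $C$ is a nonzero effective class and $K_{\overline{M}}^{2}>0$, the Hodge index theorem forces $C^{2}<0$. Feeding this into the adjunction formula
\begin{align}\notag
2p_{a}(C)-2=C^{2}+K_{\overline{M}}\cdot C=C^{2}<0,
\end{align}
we obtain $p_{a}(C)=0$ and $C^{2}=-2$, so that $C\cong\mathbb{P}^{1}$.

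Since Theorem \ref{luca} guarantees that $\overline{M}$ contains no rational curves at all, no such $C$ can exist, and therefore $K_{\overline{M}}\cdot C>0$ for every irreducible curve $C$. By Nakai--Moishezon, $K_{\overline{M}}$ is ample. I expect the only nonformal point to be the verification that a curve of vanishing canonical degree must be a $(-2)$-curve, i.e. the interplay between adjunction and the Hodge index theorem; this is classical in surface theory, and once it is in place the hypothesis of no rational curves makes the conclusion immediate.
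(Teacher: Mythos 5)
Your proof is correct and takes essentially the same route as the paper's: Nakai's criterion combined with $K_{\overline{M}}^{2}>0$ for a minimal surface of general type, the Hodge index theorem to force $E^{2}<0$ for any irreducible curve with $K_{\overline{M}}\cdot E=0$, and adjunction to identify such a curve as a smooth rational $(-2)$-curve, contradicting the absence of rational curves from Theorem \ref{luca}. Your explicit check that $\overline{M}$ is minimal (no rational curves, hence no $(-1)$-curves) is a point the paper folds into its citation of Theorem \ref{luca}, but the argument is the same.
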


\begin{proof}

By Theorem \ref{luca} we know that $\overline{M}$ is a minimal
surface of general type without rational curves. The corollary
follows from Nakai's criterion for ampleness of divisors on surfaces
\cite{Van de Ven}. More precisely, since for a minimal surface of
general type the self-intersection of the canonical divisor is
strictly positive \cite{Van de Ven}, it suffices to show that
$K_{\overline{M}}\cdot E>0$ for any effective divisor $E$. Thus, let
$E$ be an irreducible divisor and assume $K_{\overline{M}}\cdot
E=0$. By the Hodge index theorem we must have $E\cdot E<0$. By the
adjunction formula $E$ must be isomorphic to a smooth rational curve
with self-intersection $-2$.

\end{proof}

In the arithmetic case, part of the results contained in Theorem
\ref{luca} can be derived from a theorem of Tai, see \cite{Mumford}.
Furthermore, similar results for the so-called \emph{Picard} modular
surfaces are obtained by Holzapfel in \cite{Holzapfel}.

\section{Examples}

In this section we present examples of surfaces of general type
which do not admit nonpositively curved K\"ahler metrics, but such
that their underlying smooth manifolds do admit Riemannian metrics
with nonpositive Riemannian curvature. In order to do this one needs
to understand the restrictions imposed by the nonpositive curvature
assumption on the holomorphic curvature tensor.

Thus, define
\begin{align}\notag
p=2Re(\xi),\quad q=2Re(\eta)
\end{align}
where
\begin{align}\notag
\xi=\xi^{\al}\partial_{\al},\quad\eta= \eta^{\al}\partial_{\al}.
\end{align}
In real coordinates we have
\begin{align}\notag
R(p,q,q,p)=R_{hijk}p^{h}q^{i}q^{j}p^{k}
\end{align}
while in complex terms
\begin{align}\notag
R(\xi+\overline{\xi},\eta+\overline{\eta},\eta+\overline{\eta},\xi+\overline{\xi})&=R(\xi,\overline{\eta},\eta,\overline{\xi})
+R(\xi,\overline{\eta},\overline{\eta},\xi)\\
\notag
&+R(\overline{\xi},\eta,\eta,\overline{\xi})+R(\overline{\xi},\eta,\overline{\eta},\xi).
\end{align}
We then have
\begin{align}\notag
R_{hijk}p^{h}q^{i}q^{j}p^{k}&=R_{\al\bet\ga\del}\xi^{\al}\et^{\bet}\et^{\ga}\xi^{\del}
+R_{\al\bet\gam\de}\xi^{\al}\et^{\bet}\et^{\gam}\xi^{\de}+R_{\alp\be\ga\del}\xi^{\alp}\et^{\be}\et^{\ga}\xi^{\del}\\
\notag &+R_{\alp\be\gam\de}\xi^{\alp}\et^{\be}\et^{\gam}\xi^{\de}\\
\notag
&=R_{\al\bet\ga\del}\xi^{\al}\et^{\bet}\et^{\ga}\xi^{\del}-R_{\al\bet\ga\del}\xi^{\al}\et^{\bet}\et^{\del}\xi^{\ga}
-R_{\al\bet\ga\del}\xi^{\bet}\et^{\al}\et^{\ga}\xi^{\del}\\ \notag
&+R_{\al\bet\ga\del}\xi^{\bet}\et^{\al}\et^{\del}\xi^{\ga}\\
\notag
&=R_{\al\bet\ga\del}\{\xi^{\al}\et^{\bet}\et^{\ga}\xi^{\del}-\xi^{\al}\et^{\bet}\et^{\del}\xi^{\ga}-\xi^{\bet}\et^{\al}
\et^{\ga}\xi^{\del}+\xi^{\bet}\et^{\al}\et^{\del}\xi^{\ga}\} \\
\notag
&=R_{\al\bet\ga\del}(\xi^{\al}\et^{\bet}-\et^{\al}\xi^{\bet})\overline{(\xi^{\de}\et^{\gam}-\et^{\de}\xi^{\gam})}.
\end{align}
If we assume the Riemannian sectional curvature to be nonpositive we
have
\begin{align}\notag
R_{hijk}p^{h}q^{i}q^{j}p^{k}=R_{\al\bet\ga\del}(\xi^{\al}\et^{\bet}
-\et^{\al}\xi^{\bet})\overline{(\xi^{\de}\et^{\gam}-\et^{\de}\xi^{\gam})}\leq
0.
\end{align}
In complex dimension two, the right hand side of the above equality
reduces (after some manipulations) to
\begin{align}\notag
&R_{\al\bet\ga\del}(\xi^{\al}\et^{\bet}
-\et^{\al}\xi^{\bet})\overline{(\xi^{\de}\et^{\gam}-\et^{\de}\xi^{\gam})}\\
\notag
&=R_{1\overline{1}1\overline{1}}|\xi^{1}\et^{\overline{1}}-\et^{1}\xi^{\overline{1}}|^{2}+4
Re\{R_{1\overline{1}1\overline{2}}(\xi^{1}\et^{\overline{1}}-
\et^{1}\xi^{\overline{1}})\overline{(\xi^{2}\et^{\overline{1}}-\et^{2}\xi^{\overline{1}})}\}\\
\notag
&+2R_{1\overline{1}2\overline{2}}\{|\xi^{1}\et^{\overline{2}}-\et^{1}\xi^{\overline{2}}|^{2}
+Re(\xi^{1}\et^{\overline{1}}-\et^{1}\xi^{\overline{1}})\overline{(\xi^{2}\et^{\overline{2}}-\et^{2}\xi^{\overline{2}})}\}\\
\notag
&+2Re\{R_{1\overline{2}1\overline{2}}(\xi^{1}\et^{\overline{2}}-\et^{1}\xi^{\overline{2}})
\overline{(\xi^{2}\et^{\overline{1}}-\et^{2}\xi^{\overline{1}})}\}\\
\notag
&+4Re\{R_{2\overline{2}1\overline{2}}(\xi^{2}\et^{\overline{2}}-\et^{2}\xi^{\overline{2}})
\overline{(\xi^{2}\et^{\overline{1}}-\et^{2}\xi^{\overline{1}})}\} \\
\notag
&+R_{2\overline{2}2\overline{2}}|\xi^{2}\et^{\overline{2}}-\et^{2}\xi^{\overline{2}}|^{2}.
\end{align}
Following Siu-Mostow \cite{Mostow}, we choose the ansatz
\begin{align}\notag
\xi^{1}=ia,\quad \xi^{2}=-i,\quad \et^{1}=a,\quad \et^{2}=1
\end{align}
where $a$ is a real number. We get the inequality
\begin{align}\notag
R_{1\overline{1}1\overline{1}}4a^{4}-2R_{1\overline{1}2\overline{2}}4a^{2}+R_{2\overline{2}2\overline{2}}4\leq
0.
\end{align}
Since nonpositive Riemannian sectional curvature implies nonpositive
holomorphic sectional curvature, we conclude that
\begin{align}\label{Siu}
(R_{1\overline{1}2\overline{2}})^{2}\leq
R_{1\overline{1}1\overline{1}}R_{2\overline{2}2\overline{2}}.
\end{align}

\begin{theorem}\label{nonpositive}
A toroidal Hummel-Schroeder compactification does not admit any
K\"ahler metric with nonpositive Riemannian sectional curvature.
\end{theorem}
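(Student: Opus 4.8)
The plan is to argue by contradiction: suppose a toroidal Hummel-Schroeder compactification $\overline{M}$ did admit a K\"ahler metric with nonpositive Riemannian sectional curvature. The strategy is to exploit the tension between the curvature inequality \eqref{Siu} derived above and the positivity of the canonical bundle established in Corollary \ref{ample}. The key observation is that \eqref{Siu} is exactly a pointwise constraint on the components of the Bochner-type curvature expression, and that the quantity $R_{1\overline{1}1\overline{1}}R_{2\overline{2}2\overline{2}}-(R_{1\overline{1}2\overline{2}})^{2}$ is intimately related to the second Chern form or to the ratio of Chern numbers $c_{1}^{2}$ and $c_{2}$ of the K\"ahler metric.

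First I would recall that for a K\"ahler surface the Chern-Weil representatives of $c_{1}^{2}$ and $c_{2}$ can be written explicitly in terms of the components $R_{\al\bet\ga\del}$ in a unitary frame. In particular, integrating a suitable combination of these curvature polynomials and invoking \eqref{Siu} at every point should force a definite inequality between the Chern numbers $c_{1}^{2}(\overline{M})$ and $c_{2}(\overline{M})$. The natural candidate is the Bogomolov-Miyaoka-Yau direction or its reverse: nonpositive holomorphic sectional curvature together with \eqref{Siu} should bound one Chern number against the other. I would compute the pointwise integrand of $c_{1}^{2}-3c_{2}$ (or of $2c_{2}-c_{1}^{2}$, whichever sign the curvature hypothesis controls) and show that \eqref{Siu} forces its sign.

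Then I would bring in the conclusion of Theorem \ref{luca} and Corollary \ref{ample}: $\overline{M}$ is a minimal surface of general type with ample canonical bundle, so $c_{1}^{2}(\overline{M})=K_{\overline{M}}^{2}>0$ and, by Proposition \ref{Milnor}, the Euler number $c_{2}(\overline{M})>0$ as well. The contradiction should emerge by comparing the topological Chern numbers — which are fixed positive quantities — with the pointwise curvature inequality \eqref{Siu}, which constrains their Chern-Weil integrands. Concretely, I expect that nonpositive sectional curvature of a K\"ahler metric forces the holomorphic sectional curvature and the bisectional terms to be so tightly coupled by \eqref{Siu} that the signed combination integrates to the wrong sign, violating the ampleness of $K_{\overline{M}}$ or the positivity of $c_{2}$.

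The main obstacle I anticipate is carrying out the algebra that converts the pointwise inequality \eqref{Siu} into a definite statement about the integrand of a Chern-number combination, because this requires writing $c_{1}^{2}$ and $c_{2}$ as explicit quadratic expressions in the unitary curvature components $R_{1\overline{1}1\overline{1}}$, $R_{1\overline{1}2\overline{2}}$, $R_{1\overline{2}1\overline{2}}$, $R_{2\overline{2}2\overline{2}}$ and then verifying that \eqref{Siu} controls exactly the sign that contradicts positivity. The subtlety is that \eqref{Siu} involves only a subset of the curvature components (those surviving the Siu-Mostow ansatz), so one must check that the remaining components do not reverse the inequality; most likely the nonpositive holomorphic sectional curvature hypothesis supplies the missing sign constraints on the diagonal terms $R_{1\overline{1}1\overline{1}}$ and $R_{2\overline{2}2\overline{2}}$, and the careful bookkeeping of these signs is where the real work lies.
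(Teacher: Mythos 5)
Your strategy has a fundamental gap: the contradiction you are hoping for cannot exist, because the three ingredients you propose to play against each other are mutually consistent. You want to derive a sign contradiction solely from (i) the pointwise inequality \eqref{Siu} (valid for any K\"ahler metric of nonpositive sectional curvature), (ii) ampleness of $K_{\overline{M}}$, hence $c_{1}^{2}>0$, and (iii) positivity of the Euler number $c_{2}$. But compact ball quotients satisfy all three at once: they carry the complex-hyperbolic K\"ahler metric, which has negative sectional curvature (so its curvature tensor satisfies \eqref{Siu} at every point), they have ample canonical bundle, and they have $c_{2}>0$; the same holds for a product of two compact hyperbolic Riemann surfaces with the product metric. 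Consequently, no Chern--Weil integrand whose sign is controlled pointwise by \eqref{Siu} and by nonpositive (holomorphic) sectional curvature can integrate to something contradicting $c_{1}^{2}>0$ and $c_{2}>0$ --- otherwise the same computation would ``prove'' that compact ball quotients admit no nonpositively curved K\"ahler metric, which is false. The hypothesis your plan never uses, and which is indispensable, is that $\overline{M}$ is a \emph{toroidal compactification}: it contains the compactification divisor $\Sigma$, a union of smooth \emph{elliptic} curves.

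The paper's proof is local along $\Sigma$ rather than global. Suppose $\overline{M}$ carried a K\"ahler metric of nonpositive sectional curvature, and let $e_{1}$ be tangent to an elliptic component of $\Sigma$. A compact complex curve in a K\"ahler manifold is minimal, so by the Gauss equation its intrinsic curvature is bounded above by the ambient holomorphic sectional curvature, which is $\leq 0$; since the curve has genus one, Gauss--Bonnet forces the intrinsic curvature, the second fundamental form, and hence the ambient holomorphic sectional curvature in the tangent direction to vanish identically along the curve (this is the submanifold property cited from Kobayashi--Nomizu). Thus $R_{1\overline{1}1\overline{1}}=0$ along the divisor, and your own inequality \eqref{Siu} then forces $R_{1\overline{1}2\overline{2}}=0$ there as well, so the Ricci curvature in the direction tangent to the divisor vanishes along it. Integrating the Ricci form over the elliptic component gives $K_{\overline{M}}\cdot\Sigma=0$, contradicting the ampleness of $K_{\overline{M}}$ from Corollary \ref{ample}. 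So \eqref{Siu} is indeed the engine of the proof, but it is applied as a pointwise implication along the elliptic divisor --- where the degenerate directions live --- not as an estimate on the integrand of a Chern-number combination.
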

\begin{proof}

Let us proceed by contradiction. Consider one of the elliptic
divisors added in the compactification. By the properties of
submanifolds of a K\"ahler manifold \cite{Nomizu}, we have that the
holomorphic sectional curvature tangent to the elliptic divisor has
to be zero. Let us denote such a holomorphic sectional curvature by
$R_{1\overline{1}1\overline{1}}$. By the inequality (\ref{Siu}), we
conclude that $R_{1\overline{1}2\overline{2}}=0$. As a result, the
Ricci curvature tangent to the elliptic divisor has to be zero. We
conclude that
\begin{align}\notag
K_{\overline{M}}\cdot\Sigma=\int_{\Sigma}c_{1}(K_{\overline{M}})=0,
\end{align}
which contradicts the ampleness of $K_{\overline{M}}$, see Corollary
\ref{ample}.

\end{proof}

Combining Theorems \ref{luca} and \ref{nonpositive} with Corollary
\ref{ample}, we have thus proved Theorem \ref{jubilo}.

\section{Projective-algebraicity of minimal compactifications}

Let $\overline{M}$ be a smooth toroidal compactification of a
finite-volume complex-hyperbolic surface $M$ and let $\Sigma$ denote
the compactifying divisor. The set $\Sigma$ is exceptional and it
can be blow down. The resulting complex surface, with isolated
normal singularities, it is usually referred as the minimal
compactification of $M$ \cite{Siu}. In this section we address the
problem of the projective-algebraicity of minimal compactifications
of finite-volume complex-hyperbolic surfaces. This is motivated by a
beautiful example of Hironaka, see \cite{Harthshorne} page 417,
which shows that by contracting a smooth elliptic divisor on an
algebraic surface one can obtain a nonprojective complex space. In
the arithmetic case, the projective-algebraicity of minimal
compactifications of finite-volume complex-hyperbolic surfaces it is
known by the work of Baily and Borel, see \cite{Borel}.

For completeness, we recall the theory of semi-stable curves on
algebraic surfaces and logarithmic pluricanonical maps as developed
by Sakai in \cite{Sakai}.

Let $\overline{M}$ be a smooth projective surface. Let $\Sigma$ be a
reduced divisor having simple normal crossings on $\overline{M}$.
\begin{definition}

The pair $(\overline{M},\Sigma)$ is called minimal if $\overline{M}$
does not contain an exceptional curve $E$ of the first kind such
that $E\cdot\Sigma\leq 1$.
\end{definition}

We consider the logarithmic canonical line bundle
$\mathcal{L}=K_{\overline{M}}+\Sigma$ associated to $\Sigma$. Given
any integer $k$, define
$\overline{P}_{m}=\textrm{dim}H^{0}(\overline{M},\mathcal{O}(m\mathcal{L}))$.
If $\overline{P}_{m}>0$, we define the $m$-th \emph{logarithmic
canonical} map $\Phi_{m\mathcal{L}}$ of the pair
$(\overline{M},\Sigma)$ by
\begin{align}\notag
\Phi_{m\mathcal{L}}(x)=[s_{1}(x), ...,s_{N}(x)],
\end{align}
for any $x\in\overline{M}$ and where $s_{1}, ...,s_{N}$ is a basis
for the vector space
$H^{0}(\overline{M},\mathcal{O}(m\mathcal{L}))$. At this point one
introduces the notion of logarithmic Kodaira dimension exactly as in
the closed smooth case. We denote this numerical invariant by
$\overline{k}(M)$ where $M=\overline{M}\backslash\Sigma$. We refer
to \cite{Iitaka} for further details.

\begin{definition}
A curve $\Sigma$ is semi-stable if has only normal crossings and
each smooth rational component of $\Sigma$ intersects the other
components of $\Sigma$ in more than one point.
\end{definition}

The following proposition gives a numerical criterion for a minimal
semi-stable pair $(\overline{M},\Sigma)$ to be of log-general type.
For the proof we refer to \cite{Sakai}.

\begin{proposition}\label{numerical}
Given a minimal semi-stable pair $(\overline{M},\Sigma)$ we have
that $\overline{k}(M)=2$ if and only if $\mathcal{L}$ is numerically
effective and $\mathcal{L}^{2}>0$.
\end{proposition}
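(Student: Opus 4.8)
The plan is to read off the growth of $\overline{P}_{m}=h^{0}(\overline{M},\mathcal{O}(m\mathcal{L}))$ from Riemann--Roch on $\overline{M}$, which gives $\chi(m\mathcal{L})=\chi(\mathcal{O}_{\overline{M}})+\tfrac{1}{2}\,m\mathcal{L}\cdot(m\mathcal{L}-K_{\overline{M}})$ with leading term $\tfrac{1}{2}\mathcal{L}^{2}m^{2}$. Since by definition $\overline{k}(M)=\kappa(\overline{M},\mathcal{L})\le 2$ \cite{Iitaka}, the condition $\overline{k}(M)=2$ is exactly quadratic growth of $\overline{P}_{m}$. For the \emph{if} implication I would assume $\mathcal{L}$ numerically effective and $\mathcal{L}^{2}>0$, so that $\mathcal{L}$ is nef and big; then for $m\gg 0$ the class $K_{\overline{M}}-m\mathcal{L}$ pairs negatively with an ample divisor, forcing $h^{2}(m\mathcal{L})=h^{0}(K_{\overline{M}}-m\mathcal{L})=0$, and the inequality $\overline{P}_{m}\ge\chi(m\mathcal{L})=\tfrac{1}{2}\mathcal{L}^{2}m^{2}+O(m)$ yields $\overline{k}(M)=2$.

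For the \emph{only if} implication, which is the substantive one, I would use the Zariski decomposition. Assuming $\overline{k}(M)=2$ the bundle $\mathcal{L}$ is big, hence pseudo-effective, so we may write $\mathcal{L}=P+N$ with $P$ nef, $N=\sum_{i}a_{i}C_{i}$ effective of negative-definite support, and $P\cdot C_{i}=0$ for all $i$. As $N$ lies in the fixed part of $|m\mathcal{L}|$ one has $H^{0}(m\mathcal{L})=H^{0}(mP)$ for divisible $m$, so $\kappa(P)=\overline{k}(M)=2$, which for a nef divisor on a surface is equivalent to $P^{2}>0$. The whole statement then reduces to showing that the negative part vanishes: if $N=0$ then $\mathcal{L}=P$ is nef with $\mathcal{L}^{2}=P^{2}>0$, as desired.

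To force $N=0$ I would argue by contradiction using the geometric hypotheses. If $N\neq 0$ then $N^{2}<0$, so some component $C=C_{i}$ has $N\cdot C<0$, and since $P\cdot C=0$ this component satisfies $\mathcal{L}\cdot C=(K_{\overline{M}}+\Sigma)\cdot C<0$. Rewriting by adjunction as $\mathcal{L}\cdot C=2p_{a}(C)-2-C^{2}+\Sigma\cdot C$ and noting $C^{2}<0$, I split into two cases. If $C$ is a component of $\Sigma$, the same expression equals $2p_{a}(C)-2+(\Sigma-C)\cdot C$, which is nonnegative because semi-stability forces each rational component to meet the remaining components in at least two points while higher-genus components have $p_{a}(C)\ge 1$; this contradicts $\mathcal{L}\cdot C<0$. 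If $C$ is not contained in $\Sigma$, then $\Sigma\cdot C\ge 0$ and the inequality $2p_{a}(C)-2+\Sigma\cdot C<C^{2}<0$ forces $C$ to be a smooth rational curve with $C^{2}=-1$ and $\Sigma\cdot C=0$, that is an exceptional curve of the first kind meeting $\Sigma$ in no point; this is excluded by minimality of the pair $(\overline{M},\Sigma)$. The main obstacle is precisely this final dichotomy: pinning down, via adjunction and the negative-definiteness of $N$, that the only potential negative curves are components of $\Sigma$ or $(-1)$-curves nearly disjoint from $\Sigma$, and then seeing that semi-stability and minimality respectively eliminate them. This is the heart of Sakai's argument \cite{Sakai}, and it is exactly where both hypotheses on the pair are indispensable.
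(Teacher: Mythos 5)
Your proof is correct, but there is nothing in the paper to compare it against: the author explicitly omits the proof of this proposition and refers to Sakai \cite{Sakai}. What you have written is in substance Sakai's own argument --- Riemann--Roch together with the vanishing $h^{2}(m\mathcal{L})=h^{0}(K_{\overline{M}}-m\mathcal{L})=0$ for the direction (nef and $\mathcal{L}^{2}>0$) $\Rightarrow$ $\overline{k}(M)=2$, and, for the converse, the Zariski decomposition $\mathcal{L}=P+N$ followed by the adjunction dichotomy, in which semi-stability rules out components $C$ of $\Sigma$ with $\mathcal{L}\cdot C<0$ (since then $\mathcal{L}\cdot C=2p_{a}(C)-2+(\Sigma-C)\cdot C\geq 0$) and minimality of the pair rules out the only remaining possibility, a $(-1)$-curve $C$ with $\Sigma\cdot C=0$. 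So your proposal fills in the proof the paper delegates rather than diverging from it. Two minor remarks. First, since the paper assumes $\Sigma$ has \emph{simple} normal crossings, every component of $\Sigma$ is smooth, so your case distinction between rational and higher-genus components never has to contend with singular components. Second, the Zariski decomposition can be avoided in the converse direction: once $\kappa(\mathcal{L})=2$, any irreducible curve $C$ with $\mathcal{L}\cdot C<0$ must appear as a component of every member of $|m\mathcal{L}|$, which already forces $C^{2}<0$, and your adjunction dichotomy then shows $\mathcal{L}$ is nef; nef plus big gives $\mathcal{L}^{2}>0$. Either route is sound, and yours is the standard one.
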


We can now state one of the main results contained in \cite{Sakai}.
In what follows, we denote by $\mathcal{E}$ the set of irreducible
curves $E$ in $\overline{M}$ such that $\mathcal{L}\cdot E=0$.

\begin{others}[Sakai]\label{Bombieri}
Let $(\overline{M},\Sigma)$ be a minimal semi-stable pair of
log-general type. The map $\Phi_{m\mathcal{L}}$ is then an embedding
modulo $\mathcal{E}$ for any $m\geq5$.
\end{others}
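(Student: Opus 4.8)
The plan is to adapt Bombieri's proof of the pentacanonical embedding of surfaces of general type to the logarithmic setting, with $\mathcal{L}=K_{\overline{M}}+\Sigma$ playing the role of the canonical class. First I would unwind the meaning of ``embedding modulo $\mathcal{E}$'': I must show that $\Phi_{m\mathcal{L}}$ contracts precisely the curves $E$ with $\mathcal{L}\cdot E=0$ and that, away from $\mathcal{E}$, it separates points and tangent vectors. Since $(\overline{M},\Sigma)$ is of log-general type, Proposition \ref{numerical} gives that $\mathcal{L}$ is nef with $\mathcal{L}^{2}>0$, so by the Hodge index theorem the curves in $\mathcal{E}$ span a negative-definite lattice and $\mathcal{E}$ is finite; each such curve is automatically contracted since $m\mathcal{L}\cdot E=0$. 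Thus the substantive content is the separation statement, which I would reduce, in the standard way, to the vanishing
\begin{align}\notag
H^{1}(\overline{M},\mathcal{O}(m\mathcal{L})\otimes\mathcal{I}_{Z})=0
\end{align}
for every length-$\leq 2$ subscheme $Z$ supported on $\overline{M}\setminus\mathcal{E}$; surjectivity of the restriction $H^{0}(m\mathcal{L})\to H^{0}(m\mathcal{L}\otimes\mathcal{O}_{Z})$ then yields separation of the corresponding points or tangent direction.

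The engine for the vanishing is a logarithmic Kawamata--Viehweg/Ramanujam vanishing theorem applied to the decomposition
\begin{align}\notag
m\mathcal{L}=K_{\overline{M}}+\big((m-1)\mathcal{L}+\Sigma\big).
\end{align}
The place where semi-stability enters is in controlling the positivity of $(m-1)\mathcal{L}+\Sigma$ along the components of $\Sigma$. By log-adjunction, for a component $\Sigma_{i}$ one has $\mathcal{L}\cdot\Sigma_{i}=(2g_{i}-2)+\#\big(\Sigma_{i}\cap\overline{\Sigma-\Sigma_{i}}\big)$, and the semi-stability hypothesis forces this to be nonnegative even for the smooth rational components, since those meet the remaining components in at least two points. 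Together with minimality of the pair this lets me pin down the negative part of $(m-1)\mathcal{L}+\Sigma$ and show it is nef and big away from $\mathcal{E}$ once $m\geq2$, so the vanishing above holds and $|m\mathcal{L}|$ is base-point free and globally generated along $\overline{M}\setminus\mathcal{E}$.

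The delicate part, and the source of the explicit constant, is pushing the separation of points and tangent vectors down to $m\geq5$. Here I would run the Bombieri-type numerical analysis: the only obstructions to the colength-$2$ vanishing come from curves $C$ with $\mathcal{L}\cdot C\leq 1$, and I must show that the ones with $\mathcal{L}\cdot C=0$ are exactly the members of $\mathcal{E}$, while those with $\mathcal{L}\cdot C=1$ cannot obstruct separation for $m\geq5$. Minimality excludes the dangerous configurations directly: an exceptional curve $E$ of the first kind with $E\cdot\Sigma\leq1$ is ruled out by the definition of a minimal pair, and a smooth rational $(-2)$-curve necessarily satisfies $\mathcal{L}\cdot C=0$ and so lies in $\mathcal{E}$. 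A cleaner but less self-contained route is to invoke Reider's theorem for the nef class underlying $m\mathcal{L}$: its list of exceptional curves is then forced into $\mathcal{E}$ once the relevant self-intersection exceeds the Reider threshold, which holds for $m\geq5$ because $\mathcal{L}^{2}\geq1$ and the cross terms with $\Sigma$ are nonnegative by semi-stability. I expect the main obstacle to be precisely this last optimization: controlling base points and tangent separation on and near the components of $\Sigma$ and the curves of $\mathcal{E}$, where $\mathcal{L}$ has degree zero, so that the bound is $5$ rather than merely $m\gg0$.
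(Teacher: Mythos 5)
The paper contains no proof of this statement to compare yours against: it is quoted, as stated, from Sakai's paper \cite{Sakai} (the theorem environment used for external results), so your proposal must stand on its own as a proof, and it has a genuine gap at its central step.

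The gap is the vanishing engine. You propose to apply logarithmic Kawamata--Viehweg/Ramanujam vanishing (or, alternatively, Reider's theorem) to the decomposition $m\mathcal{L}=K_{\overline{M}}+\bigl((m-1)\mathcal{L}+\Sigma\bigr)$, claiming that $(m-1)\mathcal{L}+\Sigma$ is ``nef and big away from $\mathcal{E}$'' once $m\geq 2$. But nefness fails exactly where the theorem has content. Let $\Sigma_{i}$ be a smooth component of $\Sigma$ disjoint from the other components with $\Sigma_{i}^{2}<0$ --- e.g.\ the elliptic compactification divisors that are the whole point of this paper. By adjunction $\mathcal{L}\cdot\Sigma_{i}=2g(\Sigma_{i})-2=0$, so $\Sigma_{i}\in\mathcal{E}$, and then
\begin{align}\notag
\bigl((m-1)\mathcal{L}+\Sigma\bigr)\cdot\Sigma_{i}=(m-1)\,\mathcal{L}\cdot\Sigma_{i}+\Sigma\cdot\Sigma_{i}=\Sigma_{i}^{2}<0 .
\end{align}
Hence $(m-1)\mathcal{L}+\Sigma$ is not nef, and ``nef and big away from $\mathcal{E}$'' is not a hypothesis under which any standard vanishing theorem applies: positivity cannot be localized away from the bad curves, since those curves are precisely what obstructs the vanishing. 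The same failure disables your Reider alternative, because Reider likewise requires the auxiliary divisor $m\mathcal{L}-K_{\overline{M}}=(m-1)\mathcal{L}+\Sigma$ to be nef (it is also anachronistic relative to Sakai's 1980 argument, though that by itself would be harmless). To repair this one must either pass to the Zariski decomposition of the non-nef divisor and apply vanishing to its positive part --- a $\mathbb{Q}$-divisor, with the error concentrated on $\Sigma\cup\mathcal{E}$, which then has to be analyzed by hand --- or follow Bombieri's scheme as Sakai actually does: Ramanujam vanishing combined with a case analysis of numerically connected divisors, which is where semi-stability and minimality of the pair genuinely enter and where the constant $5$ is extracted. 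You yourself flag that this last optimization is left open in your sketch; since the bound $m\geq 5$ (as opposed to $m\gg 0$) together with the treatment of the degree-zero locus is the entire substance of the theorem, the proposal as written is a reasonable plan but not a proof.
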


It is then necessary to characterize the irreducible divisors in
$\mathcal{E}$. In particular, we need the following proposition.

\begin{proposition}\label{exceptional}
Let $(\overline{M},\Sigma)$ be a minimal semi-stable pair with
$\overline{k}(M)=2$. Let $E$ be an irreducible curve such that
$\mathcal{L}\cdot E=0$. If $E$ is not contained in $\Sigma$ then
$E\simeq\setC P^{1}$ and $E\cdot E=-2$.
\end{proposition}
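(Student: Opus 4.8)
The plan is to determine the two invariants of $E$, namely its arithmetic genus $p_a(E)$ and its self-intersection $E\cdot E$, by playing the Hodge index theorem against the adjunction formula and then invoking the minimality hypothesis to eliminate a borderline case. First I would record the elementary numerical consequences of $\mathcal{L}\cdot E=0$. Since $E$ is irreducible and is \emph{not} a component of $\Sigma$, the two divisors share no common component, so $\Sigma\cdot E\geq 0$. Expanding $\mathcal{L}\cdot E=(K_{\overline{M}}+\Sigma)\cdot E=0$ then yields $K_{\overline{M}}\cdot E=-\Sigma\cdot E\leq 0$.

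Next I would show $E\cdot E<0$. Because $\overline{k}(M)=2$, Proposition \ref{numerical} guarantees that $\mathcal{L}$ is numerically effective with $\mathcal{L}^{2}>0$. As $\overline{M}$ is projective, any irreducible curve meets an ample class positively and is therefore numerically nontrivial; since $\mathcal{L}\cdot E=0$, the Hodge index theorem applied to the class $\mathcal{L}$ of positive self-intersection forces $E\cdot E<0$. I would then combine this with the adjunction formula for an irreducible curve on a smooth surface, $E\cdot E+K_{\overline{M}}\cdot E=2p_a(E)-2$, which rewrites as
\[
E\cdot E=2p_a(E)-2+\Sigma\cdot E.
\]
Feeding in $E\cdot E<0$ together with $p_a(E)\geq 0$ and $\Sigma\cdot E\geq 0$, and using that both quantities are integers, gives $2p_a(E)+\Sigma\cdot E\leq 1$. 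This forces $p_a(E)=0$ and leaves only $\Sigma\cdot E\in\{0,1\}$.

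Finally, minimality of $(\overline{M},\Sigma)$ settles the remaining dichotomy. If $\Sigma\cdot E=1$, then $E\cdot E=-1$ and $p_a(E)=0$, so $E$ would be an exceptional curve of the first kind with $E\cdot\Sigma=1\leq 1$, contradicting the definition of a minimal pair. Hence $\Sigma\cdot E=0$, whence $E\cdot E=-2$; and since $p_a(E)=0$ an irreducible curve on a smooth surface is smooth and rational, we conclude $E\simeq\setC P^{1}$. The only genuinely delicate point I expect is the invocation of the Hodge index theorem: one must confirm that $E$ represents a nonzero numerical class (which projectivity supplies) and that $\mathcal{L}$ is big, which is precisely the content of Proposition \ref{numerical}. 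Everything else is bookkeeping with adjunction and the integrality of $p_a(E)$ and $\Sigma\cdot E$.
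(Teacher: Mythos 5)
Your proof is correct and follows essentially the same route as the paper's: Hodge index theorem (via $\mathcal{L}^{2}>0$ from Proposition \ref{numerical}) to get $E\cdot E<0$, adjunction combined with $K_{\overline{M}}\cdot E=-\Sigma\cdot E\leq 0$ to force $p_{a}(E)=0$, and minimality of the pair to exclude the $(-1)$-curve case. The only cosmetic difference is that you split cases on $\Sigma\cdot E\in\{0,1\}$ while the paper splits on $K_{\overline{M}}\cdot E=0$ versus $K_{\overline{M}}\cdot E<0$, which is the same dichotomy.
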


\begin{proof}

Under these assumptions we know that $\mathcal{L}^{2}>0$. By the
Hodge index theorem
\begin{align}\notag
\mathcal{L}^{2}>0, \quad\mathcal{L}\cdot E=0 \quad \Longrightarrow
\quad E^{2}<0.
\end{align}
But now $\mathcal{L}\cdot E=0$ which implies
\begin{align}\notag
K_{\overline{M}}\cdot E=-\Sigma\cdot E\leq 0.
\end{align}
We then have $K_{\overline{M}}\cdot E=0$ if and only if $E$ does not
intersect $\Sigma$. In this case $p_{a}(E)=0$ and then $E\simeq\setC
P^{1}$ and $E^{2}=-2$. Assume now that $K_{\overline{M}}\cdot E<0$,
then $K_{\overline{M}}\cdot E=E^{2}=-1$ and therefore $E$ is an
exceptional curve of the first kind such that $E\cdot\Sigma=1$. This
contradicts the minimality of the pair $(\overline{M},\Sigma)$.
\end{proof}

We are now ready to prove the main results of this section. Let
$\setC\mathcal{H}^{2}/\Gamma$ be a finite-volume complex-hyperbolic
surface that admits a smooth toroidal compactification as in Theorem
\ref{luca}. We then have that
$\overline{\setC\mathcal{H}^{2}/\Gamma}$ is a surface of general
type with compactification divisor consisting of smooth disjoint
elliptic curves.

\begin{proposition}\label{general}
Let $\overline{M}$ be a minimal surface of general type. Let
$\Sigma$ be a reduced divisor whose irreducible components consist
of disjoint smooth elliptic curves. Then, $(\overline{M},\Sigma)$ is
a minimal semi-stable pair with $\overline{k}(M)=2$.
\end{proposition}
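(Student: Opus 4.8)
The plan is to verify in turn the three requirements packaged in the statement — simple normal crossings together with semi-stability, minimality of the pair, and $\overline{k}(M)=2$ — invoking Proposition \ref{numerical} for the last. The first two are essentially automatic under the stated hypotheses. Since the irreducible components of $\Sigma$ are \emph{disjoint} smooth elliptic curves, $\Sigma$ has no singular points at all and hence trivially has simple normal crossings; moreover $\Sigma$ contains no rational component, so the defining condition for semi-stability — that each smooth rational component meet the rest of $\Sigma$ in more than one point — is vacuously satisfied. For minimality of the pair, I would recall that a minimal surface of general type contains no exceptional curve of the first kind, i.e. no $(-1)$-curve, whatsoever; there is therefore no curve $E$ of the forbidden type with $E\cdot\Sigma\leq 1$, and $(\overline{M},\Sigma)$ is minimal.

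The substance of the argument is the logarithmic Kodaira dimension. By Proposition \ref{numerical}, once semi-stability is established it suffices to show that $\mathcal{L}=K_{\overline{M}}+\Sigma$ is numerically effective and that $\mathcal{L}^{2}>0$. The decisive input is that on a minimal surface of general type $K_{\overline{M}}$ is nef and $K_{\overline{M}}^{2}>0$. Writing $\Sigma=\sum_{i}C_{i}$ with the $C_{i}$ disjoint smooth elliptic curves, adjunction gives $K_{\overline{M}}\cdot C_{i}+C_{i}^{2}=2g(C_{i})-2=0$, so $K_{\overline{M}}\cdot C_{i}=-C_{i}^{2}$; combined with nefness of $K_{\overline{M}}$ this forces $C_{i}^{2}\leq 0$, a point I expect to be the crux of the whole proof.

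Next I would check nefness of $\mathcal{L}$ on every irreducible curve $C$. If $C=C_{i}$ is a component of $\Sigma$, disjointness gives $\Sigma\cdot C_{i}=C_{i}^{2}$, so $\mathcal{L}\cdot C_{i}=K_{\overline{M}}\cdot C_{i}+C_{i}^{2}=0$. If $C$ is not a component of $\Sigma$, then $K_{\overline{M}}\cdot C\geq 0$ by nefness and $\Sigma\cdot C\geq 0$ since the intersection is a nonnegative count of points, whence $\mathcal{L}\cdot C\geq 0$; thus $\mathcal{L}$ is nef. For the self-intersection, from $\mathcal{L}\cdot C_{i}=0$ I get $\mathcal{L}\cdot\Sigma=0$, and therefore
\begin{align}\notag
\mathcal{L}^{2}=\mathcal{L}\cdot(K_{\overline{M}}+\Sigma)=\mathcal{L}\cdot K_{\overline{M}}=K_{\overline{M}}^{2}+\Sigma\cdot K_{\overline{M}}=K_{\overline{M}}^{2}-\Sigma^{2},
\end{align}
using $\Sigma\cdot K_{\overline{M}}=\sum_{i}K_{\overline{M}}\cdot C_{i}=-\sum_{i}C_{i}^{2}=-\Sigma^{2}$. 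Since $C_{i}^{2}\leq 0$ we have $\Sigma^{2}\leq 0$, so $\mathcal{L}^{2}\geq K_{\overline{M}}^{2}>0$, and Proposition \ref{numerical} yields $\overline{k}(M)=2$.

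The anticipated obstacle is precisely the sign control in the $\mathcal{L}^{2}>0$ step: a priori $\Sigma^{2}$ could be positive and spoil positivity of $\mathcal{L}^{2}$. This is averted only by the observation that the elliptic components have nonpositive self-intersection, which is where the general-type hypothesis — equivalently, the nefness of $K_{\overline{M}}$ — is genuinely used through adjunction. Everything else is formal bookkeeping with intersection numbers.
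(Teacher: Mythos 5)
Your proof is correct and follows essentially the same route as the paper: nefness of $\mathcal{L}=K_{\overline{M}}+\Sigma$, the adjunction computation $\mathcal{L}^{2}=K_{\overline{M}}^{2}-\Sigma^{2}>0$, and then Proposition \ref{numerical}. The only differences are that you spell out the (vacuous) semi-stability and pair-minimality checks that the paper leaves implicit, and that you derive $C_{i}^{2}\leq 0$ from nefness of $K_{\overline{M}}$ plus adjunction rather than quoting the slightly stronger standard fact, used in the paper, that elliptic curves on a minimal surface of general type have strictly negative self-intersection — either version suffices since $\mathcal{L}^{2}\geq K_{\overline{M}}^{2}>0$.
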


\begin{proof}

Recall that the canonical divisor of any minimal complex surface of
nonnegative Kodaira dimension is numerically effective \cite{Van de
Ven}. It follows that the adjoint divisor $\mathcal{L}$ is
numerically effective. An elliptic curve on a minimal surface of
general type has negative self intersection. Moreover, for a minimal
surface of general type it is known that the self-intersection of
the canonical divisor is strictly positive \cite{Van de Ven}. By the
adjunction formula
\begin{align}\notag
\mathcal{L}^{2}=K^{2}_{\overline{M}}-\Sigma^{2}>0.
\end{align}
By Proposition \ref{numerical}, we conclude that
$\overline{k}(M)=2$.

\end{proof}

Let $\setC \mathcal{H}^{2}\backslash\Gamma_{1}$ be a finite-volume
complex-hyperbolic surface which admits a smooth toroidal
compactification $\overline{M}_{1}$. Let
$(\overline{M}_{1},\Sigma_{1})$ be the associated minimal
semi-stable pair. By Theorem \ref{jubilo}, we can find a normal
subgroup of finite index $\Gamma_{2}\lhd\Gamma_{1}$ such that the
toroidal compactification $\overline{M}_{2}$ of $\setC
\mathcal{H}^{2}/\Gamma_{2}$ is a minimal surface of general type
with compactification divisor $\Sigma_{2}$. Since
\begin{align}\notag
\pi:\setC\mathcal{H}^{2}/\Gamma_{2}\longrightarrow\setC\mathcal{H}^{2}/\Gamma_{1}
\end{align}
is an unramified covering we conclude that
$\overline{k}(M_{1})=\overline{k}(M_{2})$ \cite{Iitaka}. But by
proposition \ref{general} we know that $\overline{k}(M_{2})=2$, it
follows that $(\overline{M}_{1},\Sigma_{1})$ is a minimal
semi-stable pair of log-general type. Let us summarize this argument
into a proposition.

\begin{proposition}\label{general1}
Let $(\overline{M},\Sigma)$ be a smooth pair arising as the toroidal
compactification of a finite-volume complex-hyperbolic surface. The
pair $(\overline{M},\Sigma)$ is minimal and log-general.
\end{proposition}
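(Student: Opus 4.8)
The plan is to prove the two assertions separately. The minimality of the pair I would deduce directly from the fact that $M=\setC\mathcal{H}^{2}/\Gamma$ is a ball quotient, while the log-general type I would obtain by descending from a finite \'etale cover on which the results of Section II are available. Before either step I would record the structure of the boundary: for a smooth toroidal compactification the divisor $\Sigma$ is a disjoint union of smooth elliptic curves. In particular $\Sigma$ has simple normal crossings and possesses no rational component, so that the pair is automatically semi-stable, and the only points at issue are the minimality of the pair and the value of $\overline{k}(M)$.

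For minimality, I would argue by contradiction, assuming that $\overline{M}$ contains an exceptional curve $E$ of the first kind with $E\cdot\Sigma\leq 1$, so that $E\simeq\setC P^{1}$ and $E^{2}=-1$. If $E\cdot\Sigma=0$, then $E$ is disjoint from $\Sigma$ and therefore lies entirely in $M$, producing a non-constant holomorphic map $\setC P^{1}\to M$. If instead $E\cdot\Sigma=1$, the intersection is a single transverse point $p$, and $E\setminus\{p\}\simeq\setC$ embeds in $M$, producing a non-constant holomorphic map $\setC\to M$. In both cases I would lift the map to the universal cover $\setC\mathcal{H}^{2}$; the lift exists because $\setC P^{1}$ and $\setC$ are simply connected, and since $\setC\mathcal{H}^{2}$ is biholomorphic to a bounded domain, Liouville's theorem forces the lift, and hence the original map, to be constant, a contradiction. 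This rules out both cases and shows that $(\overline{M},\Sigma)$ is minimal.

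For the log-general statement I would pass to a cover. Applying Mal'cev residual finiteness \cite{Malcev} to the finitely many elements of the set $\mathcal{F}^{'}$ furnished by Theorem \ref{jubilo} and intersecting the resulting finite-index normal subgroups, I would produce a finite-index normal subgroup $\Gamma^{'}\lhd\Gamma$ with $\mathcal{F}^{'}\cap\Gamma^{'}=\emptyset$. Then $\overline{M}^{'}=\overline{\setC\mathcal{H}^{2}/\Gamma^{'}}$ is a toroidal Hummel-Schroeder compactification, hence by Theorem \ref{luca} a minimal surface of general type whose compactifying divisor $\Sigma^{'}$ consists of disjoint smooth elliptic curves; Proposition \ref{general} then yields $\overline{k}(M^{'})=2$. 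Since the covering $M^{'}\to M$ is unramified, the logarithmic Kodaira dimension is a covering invariant \cite{Iitaka}, and therefore $\overline{k}(M)=\overline{k}(M^{'})=2$, which is precisely the log-general condition.

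I expect the main obstacle to be the case $E\cdot\Sigma=1$ in the minimality argument. Ruling out a $(-1)$-curve disjoint from $\Sigma$ only requires the absence of complete rational curves in the aspherical space $M$, but a $(-1)$-curve meeting $\Sigma$ in a single point instead produces an affine line $\setC\hookrightarrow M$, and it is the Liouville property of the bounded domain $\setC\mathcal{H}^{2}$, rather than asphericity alone, that supplies the needed contradiction. Note also that minimality cannot be transported from the cover $\overline{M}^{'}$ along the \'etale argument, since the presence of such exceptional curves is not an invariant of the open part; this is why it must be handled directly on $\overline{M}$.
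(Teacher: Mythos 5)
Your proposal is correct, and on the substantive point --- log-generality --- it follows exactly the paper's route: produce, via Mal'cev residual finiteness and Theorem \ref{jubilo}, a finite-index normal subgroup $\Gamma_{2}\lhd\Gamma_{1}$ avoiding $\mathcal{F}^{'}$, so that $\overline{M}_{2}$ is a minimal surface of general type whose compactifying divisor is a disjoint union of smooth elliptic curves; apply Proposition \ref{general} to get $\overline{k}(M_{2})=2$; and conclude $\overline{k}(M_{1})=\overline{k}(M_{2})=2$ from the invariance of the logarithmic Kodaira dimension under finite unramified coverings \cite{Iitaka}. Where you genuinely add something is the minimality of the pair: the paper does not argue this at all, simply introducing ``the associated minimal semi-stable pair'' $(\overline{M}_{1},\Sigma_{1})$, whereas you rule out an exceptional curve of the first kind $E$ with $E\cdot\Sigma\leq 1$ by noting that $E\setminus\Sigma$ would be a copy of $\setC P^{1}$ (if $E\cdot\Sigma=0$) or of $\setC$ (if $E\cdot\Sigma=1$) inside $M$, whose lift to the universal cover $\setC\mathcal{H}^{2}$, a bounded domain, must be constant by Liouville's theorem. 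That argument is correct and fills a step the paper leaves implicit. Your closing remark is also essentially right: minimality is free for $(\overline{M}_{2},\Sigma_{2})$ because $\overline{M}_{2}$ is a minimal surface, but it does not descend formally along the covering; it \emph{could} still be transported --- the \'etale preimage of $E\setminus\Sigma$ in $M_{2}$ consists of copies of $\setC P^{1}$ or $\setC$, whose closures are rational curves in $\overline{M}_{2}$, contradicting Theorem \ref{luca} --- but this requires an extra argument, and your direct Liouville proof on $\overline{M}_{1}$ is cleaner and keeps the two assertions independent.
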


The following theorem is the main result of the present section.

\begin{main}\label{algebraicity}

Let $(\overline{M},\Sigma)$ be a smooth pair arising as the toroidal
compactification of a finite-volume complex-hyperbolic surface.
Then, the associated minimal compactification is projective
algebraic.
\end{main}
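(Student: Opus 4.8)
The plan is to realize the minimal compactification directly as the image of a logarithmic pluricanonical map and to invoke Sakai's embedding theorem (Theorem \ref{Bombieri}). Since $K_{\overline{M}}$ is ample by Corollary \ref{ample}, the surface $\overline{M}$ is smooth projective, and by Proposition \ref{general1} the pair $(\overline{M},\Sigma)$ is a minimal semi-stable pair of log-general type. Sakai's theorem then applies and guarantees that for every $m\geq 5$ the map $\Phi_{m\mathcal{L}}$ associated to $\mathcal{L}=K_{\overline{M}}+\Sigma$ is an embedding modulo the exceptional set $\mathcal{E}$ of irreducible curves on which $\mathcal{L}$ has degree zero. Because $\Phi_{m\mathcal{L}}$ is by construction a morphism into a projective space, its image is automatically projective algebraic; the whole problem therefore reduces to showing that this morphism contracts precisely the divisor $\Sigma$ and nothing else, so that its image coincides with the minimal compactification of $M$.

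First I would compute $\mathcal{L}\cdot\Sigma_{i}$ for each irreducible component $\Sigma_{i}$ of $\Sigma$. As the components of $\Sigma$ are disjoint smooth elliptic curves, adjunction gives $K_{\overline{M}}\cdot\Sigma_{i}+\Sigma_{i}^{2}=2g-2=0$, while disjointness yields $\Sigma\cdot\Sigma_{i}=\Sigma_{i}^{2}$; hence $\mathcal{L}\cdot\Sigma_{i}=(K_{\overline{M}}+\Sigma)\cdot\Sigma_{i}=0$. Thus every component of $\Sigma$ lies in $\mathcal{E}$, so $\Phi_{m\mathcal{L}}$ contracts all of $\Sigma$.

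The key step, and the one I expect to carry the real content, is to show that $\mathcal{E}$ contains nothing beyond the components of $\Sigma$. Proposition \ref{exceptional} tells us that any irreducible curve $E\in\mathcal{E}$ not contained in $\Sigma$ must satisfy $E\simeq\setC P^{1}$ with $E^{2}=-2$. But Theorem \ref{luca} asserts that $\overline{M}$, being aspherical, contains no rational curves whatsoever. Consequently no such $E$ can exist, and $\mathcal{E}$ consists exactly of the disjoint elliptic components of $\Sigma$.

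With $\mathcal{E}$ identified as $\Sigma$, the morphism $\Phi_{m\mathcal{L}}$ is an embedding away from $\Sigma$ and contracts each connected elliptic component of $\Sigma$ to a single point. Its image is therefore the normal projective surface obtained from $\overline{M}$ by contracting each $\Sigma_{i}$, which is precisely the minimal compactification of $M$ in the sense of Siu-Yau. By the universal property of this contraction, the induced morphism from the minimal compactification into projective space is finite and birational onto its normal image, hence an isomorphism onto a closed subvariety of projective space, exhibiting the minimal compactification as projective algebraic. The only point requiring care is verifying that "embedding modulo $\mathcal{E}$" genuinely separates the distinct contracted components and induces a closed embedding of the image; this is immediate here because the $\Sigma_{i}$ are already the connected components of $\mathcal{E}$, so no two of them are identified under $\Phi_{m\mathcal{L}}$.
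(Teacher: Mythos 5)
Your overall architecture is the same as the paper's: Proposition \ref{general1}, then Sakai's Theorem \ref{Bombieri}, then the identification of $\mathcal{E}$ with $\Sigma$. But there is a genuine gap at your key step. Theorem \ref{algebraicity} is stated for an \emph{arbitrary} smooth pair $(\overline{M},\Sigma)$ arising as a toroidal compactification, whereas Theorem \ref{luca} and Corollary \ref{ample} are proved only for toroidal \emph{Hummel-Schroeder} compactifications, i.e.\ for lattices avoiding the larger finite set $\mathcal{F}'$ of Theorem \ref{Hummel2}. For a general smooth toroidal compactification both statements fail: Hirzebruch's examples (cited in Section 2 of the paper) are smooth toroidal compactifications which are blow-ups of abelian surfaces, so they contain rational curves (the exceptional curves of the blow-up), are not of general type, and have non-ample $K_{\overline{M}}$. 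Hence your two appeals --- ``$K_{\overline{M}}$ is ample by Corollary \ref{ample}, so $\overline{M}$ is projective'' and ``Theorem \ref{luca} asserts that $\overline{M}$ contains no rational curves whatsoever'' --- are not available, and are in fact false, in the generality in which the theorem is stated. This is precisely why the paper establishes log-generality through the covering trick of Proposition \ref{general1} instead of quoting Theorem \ref{luca} directly for $\overline{M}$.

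The step can be repaired along the lines of the paper's own argument, using the full strength of Proposition \ref{exceptional}: if $E\in\mathcal{E}$ is not a component of $\Sigma$, then $E\simeq\setC P^{1}$ with $E^{2}=-2$, so the adjunction formula gives $K_{\overline{M}}\cdot E=0$, and $\mathcal{L}\cdot E=0$ then forces $\Sigma\cdot E=0$. Thus $E$ would be a rational curve disjoint from $\Sigma$, hence entirely contained in the open part $M=\setC\mathcal{H}^{2}/\Gamma$. Since $\setC P^{1}$ is simply connected, such a curve would lift to the universal cover $\setC\mathcal{H}^{2}$, producing a nonconstant holomorphic map from a compact curve into a bounded domain, which is impossible. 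In other words, rational curves may very well exist on $\overline{M}$; what the argument must exploit is not their absence but the fact that any member of $\mathcal{E}$ not contained in $\Sigma$ would have to be a rational curve \emph{avoiding} $\Sigma$. Your concluding discussion of why the image of $\Phi_{m\mathcal{L}}$ coincides with the Siu-Yau minimal compactification is fine, and indeed more detailed than what the paper provides.
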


\begin{proof}

By Proposition \ref{general1}, the minimal pair
$(\overline{M},\Sigma)$ is log-general. By Theorem \ref{Bombieri} we
know that $\Phi_{m\mathcal{L}}$ is an embedding modulo $\mathcal{E}$
for any $m\geq5$. We clearly have that $\Sigma$ is contained in
$\mathcal{E}$. We claim that there are no other divisors in
$\mathcal{E}$. Assume the contrary. By Proposition
\ref{exceptional}, any other curve in $\mathcal{E}$ must be a smooth
rational divisor $E$ with self-intersection minus two. The
adjunction formula gives $K_{\overline{M}}\cdot E=0$ which implies
$\Sigma\cdot E=0$. This is clearly impossible. By Theorem
\ref{Bombieri} for $m\geq 5$, the map
\begin{align}\notag
\Phi_{m\mathcal{L}}: \overline{M}\longrightarrow \setC P^{N-1}
\end{align}
gives a realization of the minimal compactification as a
projective-algebraic variety.

\end{proof}

For an approach to the projective-algebraicity problem through
$L^{2}$-estimates for the $\overline{\partial}$-operator we refer to
the aforementioned paper of Mok \cite{Mok}.\\

\noindent\textbf{Acknowledgements}. I would like to thank Professor
Claude LeBrun for his constant support and for constructive comments
on the paper. I also would like to thank Professor Klaus Hulek for
some useful bibliographical suggestions and the referee for
pertinent comments on the manuscript.

\address{Mathematics Department, Duke University, Box 90320, Durham, NC 27708,
USA}

\emph{E-mail address}: \email{luca@math.duke.edu}


\begin{thebibliography}{99}

\bibitem{Mumford} A. Ash, D. Mumford, M. Rapoport, Y-S. Tai, \textbf{Smooth compactifications of locally symmetric varieties}.
Cambridge Mathematical Library, Cambridge University press, (2010).
\bibitem{Avez} A. Avez, \textit{Vari\'et\'es riemanniennes sans point focaux}. \textbf{C. R. Acad. Sci. Paris S\'er. A 270}
188-191, (1970).
\bibitem{Van de Ven} W. Barth, K. Hulek, C. Peters and A. Van de Ven, \textbf{Compact complex surfaces}. Springer-Verlag, (2004).
\bibitem{Borel} A. Borel, L. Ji, \textbf{Compactifications of symmetric and locally symmetric spaces}.
Mathematics: Theory and Applications, Birkh\"auser Boston, (2006).
\bibitem{Chern} S. S. Chern, \textit{On Curvature and Characteristic Classes of a Riemannian Manifold}.
\textbf{Abh. Math. Semin. Univ. Hamb. 20}, 117-126, (1955).
\bibitem{Deligne} P. Deligne, G. Mostow, \textbf{Commensurabilities among lattices in PU(1,n)}. Annals of Mathematics Studies
132, Princeton University press, (1993).
\bibitem{Eberlein} P. B. Eberlein, \textbf{Geometry of Nonpositively Curved
Manifolds}. Chicago Lectures in Mathematics, University of Chicago
Press, (1996).
\bibitem{Eberlein1} P. B. Eberlein, \textit{Lattices in spaces of nonpositive curvature}. \textbf{Ann. Math. 111}, 435-476, (1980).
\bibitem{Morgan2} R. Friedman, J. Morgan, \textbf{Smooth 4-Manifolds and Complex Surfaces}. Springer-Verlag, (1994).
\bibitem{Griffiths} P. Griffiths, J. Harris, \textbf{Principles of Algebraic Geometry}. Wiley,
New York, (1978).
\bibitem{Harthshorne} R. Hartshorne, \textbf{Algebraic Geometry}. Springer-Verlag, (1977).
\bibitem{Hirzebruch} F. Hirzebruch, \textit{Chern numbers of algebraic surfaces: an example}.
\textbf{Math. Ann 266}, 351-356 (1984).
\bibitem{Holzapfel} R. P. Holzapfel, \textit{A class of minimal surfaces in the unknown region of surface geography}.
\textbf{Math. Nachr. 98}, 211-232 (1980).
\bibitem{Schroeder} C. Hummel, V. Schroeder, \textit{Cusp closing in rank one symmetric spaces}.
\textbf{Invent. Math. 123}, 283-307 (1996).
\bibitem{Hummel} C. Hummel, \textit{Rank One Lattices whose Parabolic Isometries have no rotation part}.
\textbf{Proc. Am. Math. Soc. 126}, 2453-2458 (1998).
\bibitem{Iitaka} S. Iitaka, \textbf{Algebraic Geometry}. Springer-Verlag, (1981).
\bibitem{Nomizu} S. Kobayashi, K. Nomizu, \textbf{Foundations of Differential Geometry}. Volume II, Wiley, New York (1996).
\bibitem{Malcev} A. I. Mal'cev, \textit{On the faithful representation of infinite groups by
matrices}. \textbf{Amer. Math. Soc. Transl. (2) 45}, 1-18, (1965).
\bibitem{Margulis} G. Margulis, \textit{Arithmeticity of the irreducible lattices in semisimple groups of rank gretaer than 1}.
\textbf{Invent. Math. (1) 76}, 93-120, (1984).
\bibitem{Mok} N. Mok, \textit{Projective-algebraicity of minimal compactifications of complex-hyperbolic space forms of finite volume}. Preprint.
\bibitem{Mostow} D. Mostow, Y. T. Siu, \textit{A compact K\"ahler Surface of Negative Curvature not covered by the Ball}.
\textbf{Ann. of Math. 2}, Vol. 112, 321-360, (1980).
\bibitem{Petersen} P. Petersen, \textbf{Riemannian Geometry}.
Springer-Verlag, (2006).
\bibitem{Sakai} F. Sakai, \textit{Semi-Stable Curves on Algebraic Surfaces and Logarithmic Pluricanonical Maps}. \textbf{Math. Ann. 254},
89-120, (1980).
\bibitem{Siu} Y. T. Siu, S. T. Yau, Compactification of negatively curved complete K\"ahler manifolds of finite volume,
\textbf{Seminars in Differential Geometry}, Annals of Mathematics
Studies, Vol. 102, 363-380, (1982).


\end{thebibliography}
\end{document}